\title{Learning Theory in the Arithmetic Hierarchy}
\keywords{Inductive Inference, Learning Theory, Arithmetic Hierarchy}
\subjclass[2010]{Primary 03D80, 68Q32}
\author{Achilles A. Beros}
\address{Department of Mathematics\\
University of Wisconsin - Madison\\
Madison, WI 53706}
\email{aberos@math.wisc.edu}
\newtheorem{Theorem}{Theorem}[section]
\newtheorem{Lemma}[Theorem]{Lemma}
\newtheorem{Corollary}[Theorem]{Corollary}
\theoremstyle{definition}
\newtheorem{Definition}[Theorem]{Definition}
\newtheorem{example}[Theorem]{Example}
\newcommand{\upto} {\negthickspace \upharpoonright \negmedspace}
\begin{document}

\begin{abstract}
We consider the arithmetic complexity of index sets of uniformly computably enumerable families learnable under different learning criteria.  We determine the exact complexity of these sets for the standard notions of finite learning, learning in the limit, behaviorally correct learning and anomalous learning in the limit.  In proving the $\Sigma_5^0$-completeness result for behaviorally correct learning we prove a result of independent interest;  if a uniformly computably enumerable family is not learnable, then for any computable learner there is a $\Delta_2^0$ enumeration witnessing failure.
\end{abstract}

\maketitle

Algorithmic learning theory examines the process by which members of a class are identified from a finite amount of information.  The classes to be learned are either classes of functions or classes of computably enumerable ($c.e.$) sets.  In this paper we address the learning of $c.e.$ sets.\par

Since learning is not a mathematical concept, it is not endowed with an unambiguous definition.  Like the concept of computability, learning has an intuitive meaning, but lends itself to a number of different formalizations.  In learning theory, we consider effective formalizations and call them models of learning.  A model is principally defined by two factors: the type of information and the criterion for success.  The information is read by a learning machine from an enumeration of the set to be learned.  As the machine must be computable, it cannot consider the entirety of an enumeration and will only take a finite initial segment as input.  On such an input, the learning machine outputs a natural number, interpreted as a $\Sigma_1^0$-code describing the content of the set being enumerated.  We call such outputs \textit{hypotheses}.  As the machine reads longer initial segments of the enumeration, it outputs a sequence of hypotheses that we will call the \textit{hypothesis stream}.  The condition on when the hypothesis stream represents successful learning is the criterion for success and may depend on the accuracy, consistency or frequency of correct information in the hypothesis stream.  Additional limitations, such as bounds on the computational resources of the learning machine, are often considered.\par

The first model of learning is due to Gold ~\cite{ex}.  According to his model, now commonly referred to as TxtEx-learning, a machine is deemed to have successfully identified an enumeration if, on cofinitely many initial segments, the machine outputs the same hypothesis and it is correct.  A machine is said to have learned a set if it identifies every enumeration of the set, and has a learned a family if it learns every member of the family.  Three other standard notions are TxtFin-learning, TxtBC-learning and TxtEx$^*$-learning which differ from TxtEx-learning in what constitutes successful identification of an enumeration.  In the case of TxtFin-learning, while a machine is permitted to abstain from making a hypothesis for a finite amount of time, it must eventually output a hypothesis and the first such hypothesis must be correct.  A machine is said to have TxtBC-identified an enumeration if all but finitely many of the hypotheses in the hypothesis stream are correct.  In contrast to TxtEx-learning, they need not be the same.  Last, TxtEx$^*$-learning differs from TxtEx-learning in that the unique hypothesis appearing infinitely many times in the hypothesis stream need only code a set having finite symmetric difference with the content of the given enumeration.\par

With this paper, we introduce a new line of inquiry to the field of learning theory.  We examine the complexity of determining whether a family is learnable given a code for an effective presentation of the family, thereby establishing a measure of the complexity of the learning process.\par

We prove that decision problems for learning under the standard notions of TxtFin-learning and TxtEx-learning are $\Sigma_3^0$-complete and $\Sigma_4^0$-complete, respectively, and that those for TxtBC-learning and TxtEx$^*$-learning are both $\Sigma_5^0$-complete, when certain natural limitations are placed on the complexity of the families considered.  In proving the $\Sigma_5^0$-completeness of TxtBC-learning, we obtain a TxtBC-learning analog of a theorem of Blum and Blum~\cite{bb}.  Blum and Blum's work demonstrated that a set is TxtEx-learnable if it is TxtEx-learnable from computable enumerations.  We show that, provided the family under consideration is uniformly computably enumerable ($u.c.e.$), one need only consider $\Delta_2^0$ enumerations to decide if a family is TxtBC-learnable.\par

We preface the completeness results with a brief introduction to some of the concepts of learning theory and notation from computability theory.  For a more in depth treatment, we refer the reader to Osherson \textit{et al.} ~\cite{stl} and Soare~\cite{Soare}.\par

\section{Preliminaries}

Unless noted otherwise, all families in this paper are $u.c.e$.  We regard a set, $A$, as coding a family, $\mathcal F$, where the $i^{th}$ member (or \textit{column}) of $\mathcal F$ is $\{ x : \langle i,x \rangle \in A \}$ for a computable pairing function $\langle x,y \rangle$.  Given natural numbers, $e$ and $s$, $W_{e,s}$ denotes the result of computing the set coded by $e$ up to $s$ stages using a standard numbering of the $c.e.$ sets.  Finite strings of natural numbers are represented by lowercase Greek letters.  Enumerations, called texts in learning theory, are either treated as infinite strings or as functions on the natural numbers.  Initial segments of enumerations feature throughout this paper and are either denoted by lowercase Greek letters, as mentioned above, or by initial segments of functions: i.e. $T[n]$ in learning theory notation, or $f \upto n$ in standard logic notation.  The $n^{th}$ element of an enumeration is denoted $T(n)$ or $f(n)$, as is appropriate.  To switch from ordered lists to unordered sets, we say that content$(\sigma) = \{x\in \mathbb N : (\exists n) ( x = \sigma(n) )\}$.  For infinite enumerations, we extend the content notation to denote the set that is enumerated.  If $A$ and $B$ are sets of natural numbers and their symmetric difference, $A\triangle B$, is finite, then we write $A =^* B$.\par

Learning machines are denoted by $M$ or $N$, with subscripts or superscripts as needed to indicate parameters.  We consider an effective enumeration of all computable learning machines as having been fixed, whereby $M_n$ denotes the $n^{th}$ learner.\par

\begin{Definition}[\cite{fin}]
Fix a symbol, `?', as a placeholder to indicate that a hypothesis has not yet been made.  The definition of TxtFin-learning by a learner, $M$, is in four parts:
\begin{enumerate}
\item $M$ {\em TxtFin-identifies} an enumeration $f$ if and only if $(\exists n) (\forall n'<n) (M(f \upto n')=\mbox{?} \wedge M(f \upto n)\not = \mbox{?} \wedge W_{M(f \upharpoonright n)} = \mbox{content}(f))$.
\item $M$ {\em TxtFin-learns} a $c.e.$ set $A$ if and only if $M$ TxtFin-identifies every enumeration for $A$.
\item $M$ {\em TxtFin-learns} a family of $c.e.$ sets if and only if $M$ TxtFin-identifies every member of the family.
\item A family, $\mathcal F$, is {\em TxtFin-learnable} (denoted $\mathcal F \in \mbox{TxtFin}$) if and only if there is a machine, $M$, that TxtFin-learns $\mathcal F$.
\end{enumerate}
\end{Definition}

\begin{Definition}[\cite{ex}]
The definition of TxtEx-learning is analogous to that of TxtFin-learning.  TxtFin is everywhere replaced by TxtEx and the first clause is replaced by:
\begin{enumerate}
\item  $M$ {\em TxtEx-identifies} an enumeration $f$ if and only if $(\exists n) (\lim_{i \rightarrow \infty}M(f \upto i) = n \wedge W_n = \mbox{content}(f))$.
\end{enumerate}
\end{Definition}

\begin{Definition}[\cite{bc}]
The definition of TxtBC-learning is analogous to that of TxtFin-learning.  TxtFin is everywhere replaced by TxtBC and the first clause is replaced by:
\begin{enumerate}
\item  $M$ {\em TxtBC-identifies} an enumeration $f$ if and only if $(\exists n) (\forall i>n) (W_{M(f \upharpoonright i)}  = \mbox{content}(f))$.
\end{enumerate}
\end{Definition}

\begin{Definition}[\cite{ex*}]
The definition of TxtEx$^*$-learning is analogous to that of TxtFin-learning.  TxtFin is everywhere replaced by TxtEx$^*$ and the first clause is replaced by:
\begin{enumerate}
\item  $M$ {\em TxtEx$^*$-identifies} an enumeration $f$ if and only if $(\exists n) (\lim_{i \rightarrow \infty}M(f \upto i) = n \wedge W_n =^* \mbox{content}(f))$.
\end{enumerate}
\end{Definition}

Before turning to our own results, we present the following facts which will be needed in the subsequent sections.\par

We use the following theorem in the proof of Theorem \ref{exUp} and the corollary that follows in the proof of Theorem \ref{ex*Up}.

\begin{Theorem}[Blum and Blum ~\cite{bb}]\label{blum}
If a family is TxtEx-learned from computable enumerations by a computable machine $M$, then it is TxtEx-learned from arbitrary enumerations by a computable machine $\hat{M}$.
\end{Theorem}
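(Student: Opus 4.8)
The plan is to reduce learning from arbitrary texts to the behaviour that the hypothesis on $M$ already controls, via the classical device of \emph{locking} (or \emph{stabilizing}) \emph{sequences}. Call a finite string $\sigma$ a \emph{stabilizing sequence} for $M$ on a set $A$ if $\text{content}(\sigma)\subseteq A$ and $M(\sigma\tau)=M(\sigma)$ for every finite $\tau$ with $\text{content}(\tau)\subseteq A$; call it a \emph{locking sequence} if in addition $W_{M(\sigma)}=A$. For each member $A$ of the family I would establish two facts: (i) a stabilizing sequence for $M$ on $A$ exists, and (ii) every stabilizing sequence for $M$ on $A$ is in fact a locking sequence. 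Both rest on the observation that, since $A$ is $c.e.$, any finite string whose content is contained in $A$ can be completed to a \emph{computable} text for $A$ by appending a fixed computable enumeration of $A$; thus the computable-text hypothesis on $M$ has full force on exactly the texts the argument constructs.

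For (i) and (ii) I would argue by contradiction, building an offending computable text. If no stabilizing sequence existed, then beginning from the empty string I could repeatedly find, for the current string $\sigma_k$, an extension $\sigma_k\tau$ with $\text{content}(\tau)\subseteq A$ and $M(\sigma_k\tau)\neq M(\sigma_k)$ — this search is effective because ``$\text{content}(\tau)\subseteq A$'' is a $c.e.$ condition and $M$ is computable — while interleaving the fixed enumeration of $A$ to force the limiting content to equal $A$; the result is a computable text for $A$ on which $M$ changes its mind infinitely often, contradicting the hypothesis. For (ii), if $\sigma$ were stabilizing but $W_{M(\sigma)}\neq A$, then appending the fixed enumeration of $A$ to $\sigma$ yields a computable text for $A$ on which $M$ converges to the incorrect index $M(\sigma)$, again a contradiction.

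With (i) and (ii) in hand I would define $\hat M$ by a search for the least surviving locking-sequence candidate, a definition that does not refer to $A$. Fix a computable enumeration $\rho_0,\rho_1,\dots$ of all finite strings. On input $T\upto n$, with $D=\text{content}(T\upto n)$, set $\hat M(T\upto n)=M(\rho_j)$ for the least $j\le n$ such that $\text{content}(\rho_j)\subseteq D$ and $M(\rho_j\tau)=M(\rho_j)$ for every $\tau$ with $\text{content}(\tau)\subseteq D$ and $|\tau|\le n$, with a fixed default value if no such $j$ exists. This is computable, since each stage involves only finitely many candidate strings $\tau$ and finitely many simulations of $M$.

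Finally I would verify convergence on an arbitrary text $T$ for a member $A$. Because $D$ grows monotonically to $A$, a candidate $\rho_j$ with $\text{content}(\rho_j)\subseteq A$ that is stabilizing passes the test at every stage once $D\supseteq\text{content}(\rho_j)$, whereas a candidate that is non-stabilizing, or has content not contained in $A$, eventually fails permanently, since its mind-change witness $\tau$ (or its missing element) enters $D$ and falls under the length bound after finitely many stages. Hence the least surviving index settles to the least index $j^{*}$ of a stabilizing sequence for $M$ on $A$, which exists by (i); $\hat M$ converges to $M(\rho_{j^{*}})$, and $W_{M(\rho_{j^{*}})}=A$ by (ii), so $\hat M$ TxtEx-identifies $T$. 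The main obstacle is precisely establishing (i) and (ii) from the \emph{computable}-text hypothesis rather than from learnability on all texts: the whole argument hinges on the fact that membership in a $c.e.$ set lets one effectively complete any admissible prefix to a computable text, so that the only texts ever needed to derive a contradiction are themselves computable.
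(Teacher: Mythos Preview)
The paper does not prove this theorem: it is stated as a known result of Blum and Blum and cited to~\cite{bb}, with no argument given beyond the one-line remark that the same proof yields Corollary~\ref{blum*}. So there is nothing in the paper to compare your argument against line by line.

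That said, your proposal is exactly the classical locking-sequence proof that Blum and Blum gave, and it is correct. The two key observations --- that failure of (i) or (ii) lets one build a \emph{computable} offending text because any admissible prefix can be effectively completed using a fixed computable enumeration of the c.e.\ set $A$, and that $\hat M$ can search for the least surviving stabilizing-sequence candidate using only the data seen so far --- are precisely the content of the original argument. One small point worth making explicit: your definition of $\hat M$ tacitly assumes $M$ is total (otherwise the finitely many simulations of $M(\rho_j\tau)$ need not halt); this is harmless since, as the paper itself notes in the proof of Theorem~\ref{exUp}, one may replace $M$ by a total machine with the same limiting behaviour. Similarly, your remark that a candidate $\rho_j$ with $\text{content}(\rho_j)\not\subseteq A$ ``eventually fails permanently'' is slightly imprecise --- such a $\rho_j$ never passes the test $\text{content}(\rho_j)\subseteq D$ at any stage --- but this only strengthens the conclusion.
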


\begin{Corollary}\label{blum*}
If $\mathcal F$ is TxtEx$^*$-learned from computable enumerations by a computable machine $M$, then it is TxtEx$^*$-learned from arbitrary enumerations by a computable machine $\hat{M}$.
\end{Corollary}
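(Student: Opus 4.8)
The plan is to mirror the proof of Theorem \ref{blum}, since the Blum--Blum argument exploits only the \emph{convergence} of the hypothesis stream of $M$ on recursive texts, not the exactness of its limiting hypothesis. The only difference between TxtEx- and TxtEx$^*$-identification lies in whether the limiting index codes the content exactly or merely up to finite symmetric difference, so I expect the whole construction to transfer once $=$ is replaced by $=^*$ in the single place where correctness of the limit is invoked. I would first note that a literal black-box application of Theorem \ref{blum} cannot work: TxtEx$^*$-learnability does not imply TxtEx-learnability, and a TxtEx$^*$-learner may converge, on distinct texts for the same column $A$, to distinct indices whose sets are only $=^* A$ and possibly unequal to one another, so $M$ need not TxtEx-learn any family at all. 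Hence I would rerun the argument rather than cite the theorem directly. Call $\sigma$ a \emph{$*$-locking sequence} for $M$ on a column $A \in \mathcal F$ if content$(\sigma)\subseteq A$, the hypothesis is stable in the sense that $M(\sigma\tau)=M(\sigma)$ for every $\tau$ with content$(\tau)\subseteq A$, and $W_{M(\sigma)} =^* A$.

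The first step is the analogue of the locking-sequence lemma: whenever $M$ TxtEx$^*$-identifies every recursive text for $A$, such a $\sigma$ exists. I would prove this by contradiction. Assuming no $*$-locking sequence exists, build a recursive text $T$ for $A$ by maintaining a current prefix $\sigma$ and effectively searching for an extension $\sigma\tau$, with content$(\tau)\subseteq A$ and $|\sigma\tau|$ bounded by the stage, on which $M$ changes its mind, while interleaving an enumeration of $A$ to guarantee content$(T)=A$. Should the search ever fail to find such an extension, the current $\sigma$ is stable on all $A$-extensions, so $M$ converges on $T$; as $T$ is a recursive text for $A$, TxtEx$^*$-identification forces $W_{M(\sigma)} =^* A$, making $\sigma$ a $*$-locking sequence, a contradiction. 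Otherwise the search succeeds infinitely often, forcing infinitely many mind changes, so $M$ diverges on the recursive text $T$, again contradicting identification. The one and only point at which $=^*$ replaces $=$ is the ``hence the stable hypothesis is correct'' step.

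With the lemma in hand, I would define $\hat M$ exactly as in Theorem \ref{blum}: on input $\sigma$, enumerate candidate sequences in a fixed order, restrict to those whose content is contained in content$(\sigma)$, discard any candidate already refuted (one admitting a mind-change, within the observed content and stage bound, off its own hypothesis), and output $M$ applied to the least surviving candidate. Since content$(T)=A$ and stability on all $A$-extensions already entails $=^*$-correctness by the lemma, every non-$*$-locking candidate with content contained in $A$ is eventually refuted, every candidate with content not contained in $A$ is never a valid candidate, and the least true $*$-locking sequence $\sigma^{*}$ eventually becomes permanent. Thus $\hat M$ converges on $T$ to $M(\sigma^{*})$, and $W_{M(\sigma^{*})} =^* A$ by the lemma, so $\hat M$ TxtEx$^*$-identifies $T$.

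The main obstacle I anticipate lies entirely in the lemma's construction: keeping the defeating text recursive while searching for mind-changing extensions, and checking that relaxing exact correctness to $=^*$ does not disturb the refutation mechanism. Because $=^*$ enters only at the final correctness step and never in the effective search itself, I expect this adaptation to be routine, which is precisely what justifies stating the result as a corollary rather than reproving Blum and Blum's theorem from scratch.
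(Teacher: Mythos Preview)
Your proposal is correct and takes essentially the same approach as the paper: the paper's proof is the single sentence ``Simply replace TxtEx-learning with TxtEx$^*$-learning throughout the proof of Theorem~\ref{blum},'' and what you have written is precisely that replacement spelled out in full, with the observation that $=^*$ enters only at the correctness-of-the-limit step and nowhere in the effective locking-sequence search or the definition of $\hat M$. Your remark that one cannot cite Theorem~\ref{blum} as a black box (since a TxtEx$^*$-learner need not TxtEx-learn anything) is a useful clarification of why the corollary requires rerunning the argument rather than invoking the theorem's statement.
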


\begin{proof}
The proof is immediate.  Simply replace TxtEx-learning with TxtEx$^*$-learning throughout the proof of Theorem \ref{blum}.
\end{proof}

%
%

Next we have Angluin's Theorem.  The application of the theorem which follows is used in the proof of Theorem \ref{exlHard}. 

\begin{Theorem}[Angluin's Theorem ~\cite{angluin}]\label{ang} 
Let $\mathcal L = \{L_0, L_1, \ldots\}$ be a uniformly computable family.  $\mathcal L$ is $TxtEx$-learnable if and only if there is a $u.c.e.$ family of finite sets $\mathcal F = \{F_0, F_1,\ldots\}$ such that
\begin{enumerate}
 \item $F_i\subseteq L_i$ for all $i\in \mathbb N$
\item If $F_i\subseteq L_j\subseteq L_i$, then $L_i=L_j$
\end{enumerate}
\end{Theorem}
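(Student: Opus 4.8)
The plan is to prove the two implications separately, starting with the easier $(\Leftarrow)$ direction, which produces the learner explicitly. Assume the $u.c.e.$ tell-tale family $\mathcal F$ is given. I would define a learner $M$ that, on a string $\sigma$, searches among indices $i \le |\sigma|$ for the least one satisfying two conditions: \emph{consistency}, $\text{content}(\sigma)\subseteq L_i$, which is decidable since $\mathcal L$ is uniformly computable; and \emph{tell-tale coverage}, $F_i\subseteq \text{content}(\sigma)$, which I approximate by enumerating $F_i$ for $|\sigma|$ steps (legitimate because $\mathcal F$ is $u.c.e.$). Bounding the search by $|\sigma|$ keeps $M$ total, and $M$ outputs a uniformly obtained $\Sigma^0_1$ index for the chosen $L_i$. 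Now fix a text $T$ for a target $L\in\mathcal L$ and let $i^*$ be the least index with $L_{i^*}=L$. I claim $M$ converges to $i^*$. First, $i^*$ is eventually a permanent candidate: $F_{i^*}\subseteq L_{i^*}=L$ is finite, so it is fully enumerated and fully seen in $T$ past some stage, while $\text{content}(T[n])\subseteq L=L_{i^*}$ holds at every stage. Second, no $j<i^*$ is selected at infinitely many stages: if it were, then by monotonicity of content we would get $L\subseteq L_j$ in the limit, together with $F_j\subseteq\text{content}(T[n])\subseteq L=L_{i^*}$, hence $F_j\subseteq L_{i^*}\subseteq L_j$, and condition (2) would force $L_j=L_{i^*}=L$, contradicting minimality of $i^*$. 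Since only finitely many indices lie below $i^*$ and each is chosen finitely often, $M$ stabilizes on $i^*$ and TxtEx-identifies $T$.

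For $(\Rightarrow)$, assume $M$ TxtEx-learns $\mathcal L$. The core tool is the Blum--Blum locking-sequence lemma: for each $i$ there is a finite sequence $\sigma_i$ with $\text{content}(\sigma_i)\subseteq L_i$ such that $M(\sigma_i\frown\tau)=M(\sigma_i)$ for every $\tau$ with $\text{content}(\tau)\subseteq L_i$, and $W_{M(\sigma_i)}=L_i$; otherwise one could diagonally build a text for $L_i$ on which $M$ either changes its mind infinitely often or settles on a wrong index, contradicting that $M$ learns $L_i$. I would set $F_i=\text{content}(\sigma_i)$, so condition (1) is immediate. For condition (2), suppose $F_i\subseteq L_j\subseteq L_i$. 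Because $\text{content}(\sigma_i)=F_i\subseteq L_j$, any text for $L_j$ extending $\sigma_i$ has all content contained in $L_j\subseteq L_i$, so the locking property freezes $M$ at $M(\sigma_i)$ along it; its final index therefore codes $W_{M(\sigma_i)}=L_i$. But $M$ must learn $L_j$ from this text, so $L_i=L_j$, as required.

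The main obstacle is that the locking-sequence lemma only yields the \emph{existence} of finite tell-tales, whereas the theorem demands that $\mathcal F$ be $u.c.e.$ The locking condition carries an unbounded universal quantifier over extensions, together with the non-effective test $W_{M(\sigma_i)}=L_i$, so $\sigma_i$ cannot be located by a naive search. I expect to resolve this in Angluin's manner: enumerate $F_i$ in stages, maintaining a finite candidate prefix whose content is committed to $F_i$, and, exploiting that each $L_i$ is recursive (so finite sequences with content in $L_i$ can be generated and tested for consistency effectively), dovetail a search for an extension that destabilizes the current hypothesis, absorbing the offending content into the approximation whenever one is found. Since a genuine locking sequence exists and $M$ converges, this correction fires only finitely often, making each $F_i$ finite and the family uniformly enumerable. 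Verifying that the limit set still supports the condition-(2) argument above is the delicate bookkeeping step I anticipate to be the crux of the proof.
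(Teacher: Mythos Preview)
The paper does not contain a proof of this statement: Angluin's Theorem is presented in the Preliminaries as a cited result (with a reference to Angluin's 1980 paper), and no argument for it is given. There is therefore nothing in the paper to compare your proposal against.

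For what it is worth, your proposal is essentially the standard proof of Angluin's Theorem and is correct in outline. The $(\Leftarrow)$ direction is handled properly; the only point to be careful about is that the tell-tale check at stage $|\sigma|$ uses the partial enumeration $F_{i,|\sigma|}$ rather than $F_i$ itself, but your ``infinitely often'' argument correctly absorbs this because each $F_j$ is finite and hence eventually fully enumerated. For $(\Rightarrow)$, you have identified exactly the right obstacle --- upgrading existence of locking sequences to a \emph{uniformly enumerable} family of tell-tales --- and your sketch of the fix (dovetail a search over extensions with content in $L_i$, committing content each time $M$ is destabilized) is the standard one. The point that makes the bookkeeping go through is that the search is effective precisely because $\mathcal L$ is uniformly \emph{computable}, so ``$\text{content}(\tau)\subseteq L_i$'' is decidable; the limit of the process is then the content of a genuine locking sequence, and your condition-(2) verification applies to it verbatim. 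This matches the remark the paper makes immediately after stating the theorem, that the existence of $\mathcal F$ in fact only needs $\mathcal L$ to be $u.c.e.$, though establishing that sharper version requires a bit more care than your sketch currently provides.
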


Although we do not present it here, examination of the proof of Angluin's Theorem shows that the existence of $\mathcal F$ does not require $\mathcal L$ to be uniformly computable, merely $u.c.e.$  Example \ref{bcNotEx} uses this observation to distinguish TxtBC-learning and TxtEx-learning.\par

\begin{example}\label{bcNotEx}
Let $H_x = \{x+n:n\leq |W_x|\}$, and $L_x = \{x+n:n\in \mathbb N\}$.  Define $\mathcal F = \{H_0, L_0, H_1, L_1, \ldots \}$.  We claim that $\mathcal F$ is TxtBC-learnable, but not TxtEx-learnable.\par  

$\mathcal F$ is clearly $u.c.e.$ and can be enumerated so that $H_e$ is the $(2e)^{th}$ column and $L_e$ is the $(2e+1)^{st}$ column of $\mathcal F$.  We must verify that $\mathcal F$ is TxtBC-learnable, but that no machine can TxtEx-learn $\mathcal F$.  Consider a machine that, on input $\sigma$, sets $x_0$ and $x_1$ to be the least element and greatest element, respectively, of content$(\sigma)$ and sets $y_1$ equal to the greatest element of $W_{x_0,|\sigma|}$.  If $y_1 > x_1 - x_0$, the machine outputs a code for $H_{x_0}$; if $x_1 - x_0 \geq y_1$, it outputs a code for $L_{x_0}$.  Thus, for enumerations of infinite intervals, the machine may vacillate between two different correct codes.  For finite intervals, eventually only one correct code will be output.  We conclude that $\mathcal F$ is TxtBC-learnable.\par

Now, we wish to show that $\mathcal F$ is not TxtEx-learnable.  To obtain a contradiction, assume that we have a machine, $M$, that TxtEx-learns $\mathcal F$.  As we observed at the outset, this means there is a $u.c.e.$ family $\{G_0, G_1, \ldots\}$ such that each $G_i$ is finite, $G_{2i} \subseteq H_i$, $G_{2i+1} \subseteq L_i$ and, if $G_i \subseteq A \subseteq B$ where $B$ is the $i^{th}$ set in $\mathcal F$ and $A \in \mathcal F$, then $A=B$.  Specifically, $G_{2i+1} \subseteq L_i$ and $G_{2i+1} \subseteq H_i$ if and only if  $H_i=L_i$ - exactly when $W_i$ is an infinite set.  Let $m$ be the maximum number in $G_{2i+1}$.  If card$(W_i) \geq m-i+1$, then $G_{2i+1} \subseteq H_i$ and $H_{i} = L_{i}$.  In other words, we can decide in the limit whether or not $W_i$ is infinite.  Since we cannot actually decide in the limit whether or not a number codes an infinite set, we have obtained the desired contradiction.
\end{example}  

We conclude this section with the following definition.

\begin{Definition}
Define the following four index sets of $\Sigma_1^0$ codes for $u.c.e.$ families learnable according to the given criterion.  
\begin{enumerate}
\item Define $FINL$ to be the index set for TxtFin-learning.
\item Define $EXL$ to be the index set for TxtEx-learning.
\item Define $BCL$ to be the index set for TxtBC-learning.
\item Define $EXL^*$ to be the index set for TxtEx$^*$-learning.
\end{enumerate}
\end{Definition}

\section{TxtFin-Learning}

We begin the presentation of our results by demonstrating that TxtFin-learning is $\Sigma_3^0$-complete.  This is accomplished in two steps.  With Theorem \ref{finUp} we place a $\Sigma_3^0$ upper bound on the complexity of FINL.  Next, Theorem \ref{finHard} reduces an arbitrary $\Sigma_3^0$ predicate to FINL.

\begin{Theorem}\label{finUp}
FINL has a $\Sigma^0_3$ description.
\end{Theorem}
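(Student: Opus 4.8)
The plan is to replace the (second-order) quantifier over all enumerations by a single first-order quantifier over a total computable \emph{tell-tale function}, reducing $FINL$ to a predicate of the shape $\exists t\,(\Pi_2^0)$. Write $C_i$ for the $i$th column $\{x:\langle i,x\rangle\in W_e\}$ of the family coded by $e$, let $\varphi_t$ be the $t$th partial computable function, and let $D_n$ be the finite set with canonical index $n$. I claim that $e\in FINL$ if and only if there is a number $t$ such that $\varphi_t$ is total and, writing $D_{\varphi_t(i)}$ for the tell-tale of the $i$th column,
\begin{enumerate}
\item $D_{\varphi_t(i)}\subseteq C_i$ for every $i$, and
\item $D_{\varphi_t(i)}\subseteq C_j$ implies $C_i=C_j$ for all $i,j$.
\end{enumerate}
This is the effective, definite-tell-tale analogue of Angluin's condition (Theorem \ref{ang}); the essential difference from the TxtEx-criterion is that clause (2) drops the hypothesis $C_j\subseteq C_i$, reflecting that a TxtFin-learner's \emph{first} non-`?' hypothesis must already be correct.

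For the direction from (1)--(2) to learnability, I would build the canonical learner $\hat M$ that, on input $\sigma$, searches for the least $i$ with $D_{\varphi_t(i)}\subseteq\mbox{content}(\sigma)$ and, if one is found, outputs a uniformly obtained $\Sigma_1^0$-code for $C_i$, and otherwise outputs `?'. On any enumeration of a column $C_{i_0}$, $\hat M$ abstains until the content first covers some $D_{\varphi_t(i)}$; at that first commitment the least such $i$ satisfies $D_{\varphi_t(i)}\subseteq\mbox{content}(\sigma)\subseteq C_{i_0}$, so clause (2) forces $C_i=C_{i_0}$ and the hypothesis is correct. Since the content eventually covers the finite set $D_{\varphi_t(i_0)}$, the learner does commit, so $\hat M$ TxtFin-learns the family.

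The reverse direction is where the work lies, and is the step I expect to be the main obstacle. Suppose some computable $M$ TxtFin-learns the family. I would define $\varphi_t(i)$ by running $M$ along the canonical enumeration of $C_i$ (enumerate $W_e$, read off the second coordinates of pairs whose first coordinate is $i$, padding by repetition) until $M$ first outputs a value $\neq\,$`?', and then letting $D_{\varphi_t(i)}$ be the content read so far. Because $M$ TxtFin-identifies this particular enumeration, the search halts, so $\varphi_t$ is total and clause (1) is immediate. For clause (2), suppose $D_{\varphi_t(i)}\subseteq C_j$. The prefix witnessing $\varphi_t(i)$ is then the beginning of a legitimate enumeration of $C_j$; feeding it to $M$ produces the same first non-`?' hypothesis, whose index $a$ satisfies $W_a=C_i$ by correctness on the enumeration of $C_i$ and $W_a=C_j$ by correctness on this enumeration of $C_j$. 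Hence $C_i=C_j$. The delicate point is exactly this: the tell-tales must be produced by a \emph{total computable} function rather than merely as a $u.c.e.$ family, since an explicit finiteness requirement on a $u.c.e.$ family of tell-tales would introduce a $\forall i\,\Sigma_2^0$, i.e.\ a $\Pi_3^0$, clause and destroy the bound.

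Finally I would read off the complexity, using that equality of two $c.e.$ sets is $\Pi_2^0$ while membership of a number in a column is $\Sigma_1^0$. The statement `$\varphi_t$ is total' is $\forall i\,\exists s\,(\varphi_{t,s}(i)\!\downarrow)$, hence $\Pi_2^0$; clause (1) is a $\forall i$ of a $\Sigma_1^0$ matrix, hence $\Pi_2^0$; and clause (2) is a $\forall i,j$ of an implication whose antecedent is $\Sigma_1^0$ and whose consequent $C_i=C_j$ is $\Pi_2^0$, hence $\Pi_2^0$. The conjunction is therefore $\Pi_2^0$, and prefixing the single number quantifier $\exists t$ yields a $\Sigma_3^0$ description of $FINL$.
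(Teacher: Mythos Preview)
Your proof is correct and takes a genuinely different route from the paper's. The paper quantifies directly over learners: it writes down a $\Sigma_3^0$ formula asserting the existence of a machine $M_k$ which (i) commits on some string drawn from each column and (ii) never makes an incorrect \emph{first} commitment on any string drawn from any column, and then upgrades $M_k$ to an honest TxtFin-learner $\hat M$ by having $\hat M(\sigma)$ search over all rearrangements of $\mbox{content}(\sigma)$ of length at most $|\sigma|$ for the first string on which $M_k$ commits. You instead quantify over a total computable \emph{tell-tale function}, obtaining an Angluin-style structural characterisation with the inclusion hypothesis $C_j\subseteq C_i$ removed. Your route buys a clean $\Pi_2^0$ matrix and, as a bonus, a self-contained characterisation of TxtFin-learnability for $u.c.e.$ families; the paper's route stays closer to the raw definition and avoids the tell-tale detour, at the price of the rearrangement trick to make the witnessing learner order-independent. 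Both arguments hinge on the same underlying fact you isolate in your reverse direction: the prefix on which a TxtFin-learner first commits is a locking sequence whose content pins down the target uniquely.

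One small repair: your forward-direction learner ``searches for the least $i$ with $D_{\varphi_t(i)}\subseteq\mbox{content}(\sigma)$ and otherwise outputs `?'\,'', but deciding that \emph{no} such $i$ exists is not computable. Bound the search by $i\le|\sigma|$ and run $\varphi_t(i)$ for at most $|\sigma|$ steps; the learner is then total, and your correctness argument is untouched, since whatever index $i$ triggers the first commitment still satisfies $D_{\varphi_t(i)}\subseteq\mbox{content}(\sigma)\subseteq C_{i_0}$, hence $C_i=C_{i_0}$ by clause~(2).
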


\begin{proof}
Suppose $e$ codes a $u.c.e.$ family $\mathcal L = \{L_0, L_1, \ldots\}$.  We will show that $e \in FINL$ if and only if
$$(\exists k) (\forall i) \bigg( (\exists \sigma) \Big((\mbox{content}(\sigma)\subseteq L_i) \wedge(M_k(\sigma)\neq ?)\Big) \wedge \psi (k) \bigg)$$
where $\psi (k)$ denotes
$$(\forall \alpha,j) (\exists \tau \prec \alpha) \Big( (M_k(\tau) \neq ?)\vee(\mbox{content}(\alpha)\not\subseteq L_{j})
\vee(M_k(\alpha) = ?)\vee(W_{M_k(\alpha)} = L_j) \Big).$$

Observe that the formula mandates the existence of a learning machine, $M_k$, such that for every set in the family there is a string of elements from that set on which the learner outputs a hypothesis.  Furthermore, if it outputs a least hypothesis, in the sense that the only hypothesis it makes on proper initial segments of the given data is ?, then that hypothesis is correct.  We now build a new machine, $\hat{M}$, based on $M_k$, which TxtFin-learns the family $\mathcal L$.\par

For a string $\sigma$, define $A_{\sigma}=\{\tau: \mbox{content}(\tau)\subseteq \mbox{content}(\sigma) \wedge |\tau|\leq |\sigma|\}$.  Order $A_{\sigma}$ by $\sigma < \tau$ if either $|\sigma| < |\tau|$ or else $|\sigma|=|\tau|$ and $\sigma$ is below $\tau$ in the lexicographical order on $\mathbb N^{|\sigma|}$.  Define $\hat{M}(\sigma)$ to be $M_k(\tau)$ where $\tau$ is the least element of $A_{\sigma}$ on which $M_k$ outputs a hypothesis other than ?.   If no such $\tau$ exists, $\hat M(\sigma) = $?.  On no enumeration will the least hypothesis of $\hat{M}$ be incorrect since that would imply the existence of such an enumeration for $M_k$.  Fix an arbitrary enumeration, $f$, for $L_i\in \mathcal L$.  Let $\sigma$ be a string, with content$(\sigma) \subseteq L_i$, on which $M_k$ outputs a least (and hence correct) hypothesis.  Every element of content$(\sigma)$ appears in $f$, thus there is an $n$ such that $\sigma \in A_{f \upharpoonright n}$.  For some $m\leq n$, $\hat{M}(f \upto m)$ will be a least and correct hypothesis.\par

Since the given formula is $\Sigma^0_3$, we have produced a predicate with the desired properties.

\end{proof}

\begin{Theorem}\label{finHard}
FINL is $\Sigma^0_3$-hard.
\end{Theorem}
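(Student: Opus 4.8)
The plan is to reduce a $\Sigma_3^0$-complete predicate to $FINL$. A natural choice is $\mathrm{Cof} = \{ e : W_e \text{ is cofinite} \}$, or equivalently the $\Sigma_3^0$ predicate $P(e) \equiv (\exists k)(\forall i)(\exists s)\, R(e,i,k,s)$ for a fixed computable relation $R$. Given such an input $e$, I would effectively construct a $u.c.e.$ family $\mathcal F_e$ so that $\mathcal F_e$ is TxtFin-learnable exactly when $P(e)$ holds. The construction will be uniform in $e$, so that $e \mapsto (\text{code for } \mathcal F_e)$ is computable, yielding a many-one reduction of $P$ to $FINL$.

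The key design principle is to exploit the defining obstruction to TxtFin-learning: a learner must commit to a correct hypothesis the \emph{first} time it leaves `?', so TxtFin-learnability fails precisely when the family contains two sets $A \subsetneq B$ both of which must be learned, since any initial segment drawn from $A$ is also a valid initial segment for $B$, forcing a premature and necessarily incorrect first commitment on one of them. I would therefore encode the $\Sigma_3^0$ question into whether such a nested pair is forced into the family. Concretely, for each potential witness $k$ I would build a column whose content grows to a full interval $L = \{\langle k, x\rangle : x \in \mathbb N\}$ if the inner $\Pi_2^0$ condition $(\forall i)(\exists s) R(e,i,k,s)$ fails, and stays a proper finite (or strictly smaller) subset $H \subsetneq L$ if it holds; by arranging the columns so that a ``bad'' nested pair $H \subsetneq L$ appears in $\mathcal F_e$ exactly when \emph{no} $k$ works, I get: $P(e)$ holds $\iff$ some witnessing $k$ succeeds $\iff$ the family avoids the fatal nested pairs $\iff \mathcal F_e \in FINL$. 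The finite/infinite distinction of the columns can be controlled by a $W$-based gadget exactly as in Example \ref{bcNotEx}, where the sizes $|W_{\cdot}|$ govern whether $H_x = L_x$.

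I would then verify both directions. For the forward direction, assuming $P(e)$, I exhibit a learner for $\mathcal F_e$: once a witness $k$ is active, every column is either a known finite set that can be identified finitely, or an interval that is distinguishable from its competitors, and the learner waits long enough before committing so that its first non-`?' hypothesis is correct. For the reverse direction, assuming $\neg P(e)$, I show that a nested pair $A \subsetneq B$ with $A, B \in \mathcal F_e$ is realized; by the standard locking-sequence argument, no machine can TxtFin-learn both $A$ and $B$, since on a text for $B$ that begins by enumerating all of $A$ first, the learner must either never commit (failing on $A$) or commit while the content is still $A$ (failing on one of the two), giving $\mathcal F_e \notin FINL$.

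The main obstacle I anticipate is making the column sizes respond correctly and \emph{uniformly} to the nested quantifier structure: the inner condition is only $\Pi_2^0$, so for a fixed candidate $k$ I cannot decide in the limit whether it succeeds, and I must arrange the enumeration of the columns so that the presence or absence of the fatal nested pair tracks $(\exists k)$ over all $k$ simultaneously without accidentally introducing a spurious nested pair (which would break the reverse direction) or collapsing a genuine obstruction (which would break the forward direction). Getting this bookkeeping right — ensuring exactly one family-level behavior for ``some $k$ works'' versus ``no $k$ works'' — is where the careful combinatorial work lies, and is precisely the point at which the $W_x$-cardinality gadget of Example \ref{bcNotEx} must be adapted to carry the extra quantifier.
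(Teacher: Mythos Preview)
Your high-level plan (many-one reduce a $\Sigma_3^0$-complete predicate to $FINL$ via a uniformly constructed family) is the right shape, but the concrete mechanism you propose has a genuine gap that you correctly flag as ``the main obstacle'' and then do not resolve.

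First, a small correction: the presence of a nested pair $A\subsetneq B$ is \emph{sufficient} for TxtFin-failure, but not necessary, so ``precisely'' is wrong.  The family $\{\mathbb N\setminus\{x\}:x\in\mathbb N\}$ has no nested pairs yet is not TxtFin-learnable, because every finite string is consistent with infinitely many members.  This matters because the paper's construction exploits exactly this second kind of obstruction, not nested pairs.

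The real problem is the column-by-column design.  If column $k$ independently produces (or avoids) a nested pair according to whether the inner $\Pi_2^0$ condition $(\forall i)(\exists s)R(e,i,k,s)$ holds, then $P(e)$ only guarantees that \emph{some} $k_0$ is good; the other columns with failing $\Pi_2^0(k)$ still introduce nested pairs, and a single nested pair already kills TxtFin-learnability.  Even if you reduce from $\mathrm{Cof}$ with the natural witnesses $\Pi_2^0(k)\equiv[k,\infty)\subseteq W_e$, the case $e\in\mathrm{Cof}$ still leaves finitely many bad columns $k<k_0$, and those finitely many nested pairs are fatal.  Your sketch of the forward-direction learner (``once a witness $k$ is active\ldots'') presumes the learner knows which $k$ works and that the bad columns are harmless; neither is justified.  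The gadget of Example~\ref{bcNotEx} does not obviously adapt here: it separates TxtEx from TxtBC, and as written its family always contains nested $L$-intervals, so it is never TxtFin-learnable.

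By contrast, the paper's proof avoids nested pairs entirely.  It builds a single global family with a labeling scheme: for each candidate witness $x$ it tracks a counter $h_x^s$ measuring progress on the $\Pi_2^0$ condition.  When $h_x^s$ grows, sets sharing an $x$-label are made to agree on a longer initial segment and each set eventually receives a unique $x$-label, so a learner can commit on the first label it sees.  When $h_x^s$ stalls, new sets are spawned that agree with existing ones on the current initial segment but are separated by withheld ``$p$''-elements, mimicking the $\{\mathbb N\setminus\{x\}\}$ obstruction.  If $P(e)$ holds, some $h_x^s\to\infty$ and the $x$-labels make the family TxtFin-learnable regardless of what happens on the other coordinates; if $\neg P(e)$, every label stabilizes and for any first commitment by any learner one finds a later-created set containing the same data but provably different.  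The key difference from your plan is that the ``good'' witness acts globally on all sets at once via the labeling, rather than controlling only its own column; this is precisely what your outline is missing.
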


\begin{proof}
Consider a $\Sigma_3^0$ predicate $P(e) \leftrightarrow (\exists x) (\forall y) (\exists z) (R(e,x,y,z))$, where $R$ is a computable predicate.  We will reduce $P$ to FINL by means of a computable function such that the image of $e$ is a code for a $u.c.e.$ family that is TxtFin-learnable if $P(e)$ and not TxtFin-learnable if $\neg P(e)$.  We now fix $e$ and proceed with the construction of a family based on that particular $e$.  The family under construction is denoted $\mathcal G = \{ G_0, G_1, \ldots \}$.  While $\mathcal G$ depends on $e$, we omit the parameter for the sake of simplicity as we are only concerned with the fixed value of $e$ during the construction below.\par

Each $G \in \mathcal G$ will consist of ordered pairs and can thus be partitioned into columns $C(G,i) = \{ x : \langle i,x \rangle \in G \}$.  For convenience, we will index the columns starting with $-1$.  Let $C(i) = \{ \langle i,x \rangle : x \in \mathbb N \}$.  For the remainder of the construction we will adhere to the notation defined in the following list.

\begin{itemize}

\item Let $\langle x_s,y_s,z_s \rangle$ be a computable enumeration of all triples of natural numbers.

\item For $x \geq 1$, let $h_x^s$ be the number of stages, up to $s$, at which the largest $j$, such that $(\forall y \leq j) (\exists i \leq s) ( x_i = x \wedge y_i = y \wedge R(e,x-1,y,z_i))$, has increased.

\item If it exists, let $h_x = \lim_{s \rightarrow \infty} h_x^s$.

\item For $x\in \mathbb N$, an \textit{$x$-label} is a number in the $x^{th}$-column, $C(x)$, used to distinguish sets in $\mathcal G$.  Labels may be enumerated into any column of any $G\in \mathcal G$ except $C(G,-1)$ during the construction.  We say $G$ has an $x$-label $k$ when $k \in C(G,x)$.  Equivalently, when $\langle x,k \rangle \in G$.

\item Define $\mathcal S_x^k = \{ G \in \mathcal G : \mbox{$G$ has an $x$-label $k$} \}$.  The family depends on the stage, but we do not include any notation to indicate the stage as it will be clear from context.  When we wish to reference the $i^{th}$-member of $\mathcal S_x^k$, we will write $S_x^k(i)$.

\item Let $n_x^k = {\rm card}(\mathcal S_x^k)$.

\item Define a function, $p_x^k$, used to record numbers associated with each member of $\mathcal S_x^k$.  In particular, $p_x^k(i)$ will be a number withheld from $S_x^k(i)$.  Denote by $P_x^k$ the set $\{ p_x^k(0),\ldots,p_x^k(n_x^k) \}$.  At each stage, we will ensure that $P_x^k \setminus \{ p_x^k(i) \} \subseteq C(S_x^k(i),-1)$ and $p_x^k(i) \not\in C(S_x^k(i),-1)$.  At certain stages, the values of the $p_x^k$ will change.

\end{itemize}

Next, we describe the actions taken at a given stage of the construction.  The construction consists of using the predicate, $P$, to resolve two opposing forces.  One is the attempt to label all sets in a unique way, and the other is to create an infinite family that mirrors the structure of $\{ \mathbb N \setminus \{x\} : x \in \mathbb N \}$ every set of which has the same label.\par

$ $\par

\textbf{Stage s:}  The triple under consideration is $\langle x_s,y_s,z_s \rangle$.  Let $t$ be the most recent previous stage at which $x_t = x_s$.  We examine two cases: $h_{x_s}^s > h_{x_s}^t$ and $h_{x_s}^s = h_{x_s}^t$.\par

First, suppose that $h_{x_s}^s > h_{x_s}^t$.  We interpret this increase as progress toward verifying $P(e)$.  We enumerate elements as needed to ensure that, for each $x_s$-label, $k$, and $i \leq \langle x_s,k \rangle + h_{x_s}^s$, if $G,G' \in \mathcal S_{x_s}^k$, then $C(G,i) \cap [0,\langle x_s,k \rangle + h_{x_s}^s] = C(G',i) \cap [0,\langle x_s,k \rangle + h_{x_s}^s]$.  If $p_{x_s}^k(i) \leq \langle x_s,k \rangle + h_{x_s}^s$, we pick a member of $C(-1)$ greater than $\langle x_s,k \rangle + h_{x_s}^s$ and every number used in the construction so far, and set $p_{x_s}^k(i)$ equal to the chosen number.  We enumerate $p_{x_s}^k(i)$ into every member of $\mathcal S_{x_s}^k \setminus \{ S_{x_s}^k(i) \}$.  Thus, for each set with $x_s$-label $k$, there is a particular natural number the set does not contain, but which is contained in all other sets with $x_s$-label $k$.\par

Finally, we pick the set of least index in $\mathcal G$ that has not yet been assigned an $x_s$-label and assign it a unique, and previously unused, $x_s$-label.

In the second case, suppose that $h_{x_s}^s = h_{x_s}^t$.  This stability suggests that the outcome will be $\neg P(e)$.  For each $k$ currently in use as an $x_s$-label, we create a new set, $S_{x_s}^k(n_{x_s}^k+1) \in \mathcal S_{x_s}^k$, such that $C(S_{x_s}^k(n_{x_s}^k+1),j) \cap [0,\langle x_s,k \rangle + h_{x_s}^s] = C(S_{x_s}^k(n_{x_s}^k),j) \cap [0,\langle x_s,k \rangle + h_{x_s}^s]$, for $j \leq k + h_{x_s}^s$.  We enumerate $P_{x_s}^k$ into $S_{x_s}^k(n_{x_s}^k+1)$ and set $p_{x_s}^k(n_{x_s}^k+1)$ equal to the least member of $C(-1)$ not used during the construction so far and greater than $\langle x_s,k \rangle + h_{x_s}^s$.  Finally, we enumerate $p_{x_s}^k(n_{x_s}^k+1)$ into every member of $\mathcal S_{x_s}^k \setminus \{S_{x_s}^k(n_{x_s}^k+1)\}$.\par

$ $\par

\textbf{Verification:}  If $P(e)$, then $(\exists x) (\forall y) (\exists z) (R(e,x,y,z))$.  Hence, for some $x$, $h_x^s \rightarrow \infty$.  For infinitely many $s$, $x_s = x$, thus every set in $\mathcal G$ will eventually receive an $x$-label.  At such stages, agreement between sets with the same label is also increased.  Consequently, any two sets in $\mathcal G$ with the same $x$-label are equal.  The family is learned by a machine that searches for the least $x$-label and outputs a code for the first set in $\mathcal G$ that receives the same $x$-label.\par

If $\neg P(e)$, then $(\forall x) (\exists y) (\forall z) (\neg R(e,x,y,z))$.  Fix any machine, $M$.  If $M(\sigma) = \ ?$ for every string $\sigma$ with content contained in a member of $\mathcal G$, $M$ has failed to learn $\mathcal G$ and we are done.  Otherwise, we may pick a string, $\sigma$, such that

\begin{itemize}
\item $M(\sigma) \neq \ ?$;
\item for all $\tau \prec \sigma$, $M(\tau) = \ ?$;
\item for some $G \in \mathcal G$, content$(\sigma) \subseteq G$.
\end{itemize}

Let $k$ be an $x$-label with which $G$ is marked such that ${\rm max}(\mbox{content}(\sigma)) < \langle x_s,k \rangle + h_{x_s}^s$.  Pick a stage, $s$, at which $h_y^s = h_y$ for all $y$ such that $G$ contains a $y$-label less than or equal to $\langle x,k \rangle + h_x$.  At a subsequent stage, $t$, $x_t = x$ and a new set, $G'$, is created containing content$(\sigma)$.  Labels contained in $G'$ are either $y$-labels, $k$, such that $h_y^s$ will never increase or labels enumerated into $G'$ after stage $t$.  The latter are only shared with sets created at subsequent stages.  As a consequence, there is a member of $P_x^k$ never enumerated into $G'$, but contained in $G$.  We conclude that $M$, an arbitrarily chosen learning machine, has failed to TxtFin-learn $\mathcal G$.\par

$ $\par

The computable function that maps $e$ to a code for the $u.c.e.$ family $\mathcal G$ constructed above is a reduction of $P$ to FINL.\par
\end{proof}

\section{TxtEx-Learning}\label{exSec}

We now proceed to describe the arithmetic complexity of TxtEx-learning.  The first $\Sigma_4^0$ description of EXL of which we are aware is due to Sanjay Jain.  Here we present a different formula, but one which explicitly illustrates the underlying structure and serves as a model for the $\Sigma_5^0$ description of BCL given in Section \ref{bcSec}.

\begin{Theorem}\label{exUp}  
EXL has a $\Sigma_4^0$ description.
\end{Theorem}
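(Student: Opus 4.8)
The plan is to mirror the strategy of Theorem \ref{finUp}: use a single existential quantifier over an index $k$ of a learner, and show that the condition ``$M_k$ TxtEx-learns $\mathcal L$'' can be put in $\Pi_3^0$ form. First I would pass to a normal form for the learner. Given $M_k$, I would build (exactly as the machine $\hat M$ was built from $M_k$ in Theorem \ref{finUp}, by searching over the finite, content- and length-bounded set $A_\sigma$) a \emph{rearrangement-independent} learner $\hat M_k$, whose hypothesis on $\sigma$ depends only on $\mbox{content}(\sigma)$ and $|\sigma|$. Using Theorem \ref{blum} to move between arbitrary and computable enumerations, I would argue that $M_k$ TxtEx-learns $\mathcal L$ if and only if $\hat M_k$ does, so that $e\in EXL$ iff $(\exists k)[\hat M_k \text{ TxtEx-learns every column of } \mathcal L]$. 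The point of the normal form is that the order-dependent pathologies that block a locking-sequence characterization for arbitrary learners cannot occur for $\hat M_k$.

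Next I would give a finite characterization of identification for $\hat M_k$. Call $\tau$ \emph{stabilizing for $\hat M_k$ on $L_i$} if $\mbox{content}(\tau)\subseteq L_i$ and $\hat M_k(\tau\rho)=\hat M_k(\tau)$ for every $\rho$ with $\mbox{content}(\rho)\subseteq L_i$; since the premise on $\rho$ is $\Sigma_1^0$, this is a $\Pi_1^0$ condition. The key lemma is that $\hat M_k$ TxtEx-identifies $L_i$ if and only if (i) some stabilizing $\tau$ exists (equivalently, every content-$\subseteq L_i$ string extends to one), and (ii) every stabilizing $\tau$ satisfies $W_{\hat M_k(\tau)}=L_i$. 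Sufficiency is exactly where rearrangement-independence is essential: on any text for $L_i$ the content eventually dominates that of a stabilizing $\tau$ while the length grows without bound, after which $\hat M_k$ is pinned to the value $\hat M_k(\tau)$, which (ii) guarantees is correct.

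The arithmetic payoff comes from separating (i) and (ii) into two conjuncts, which is legitimate because any two stabilizing sequences must agree: feeding $\hat M_k$ the union of their contents at a common large length forces $\hat M_k(\tau)=\hat M_k(\tau')$, so if one stabilizing sequence is correct they all are. Hence ``$\hat M_k$ learns every column'' is equivalent to
$$(\forall i)(\forall\sigma)(\exists\tau)\,\theta(k,i,\sigma,\tau)\ \wedge\ (\forall i)(\forall\tau)\,\eta(k,i,\tau),$$
where $\theta$ says that if $\mbox{content}(\sigma)\subseteq L_i$ then $\sigma\preceq\tau$, $\mbox{content}(\tau)\subseteq L_i$, and $\tau$ is stabilizing, and $\eta$ says that if $\tau$ is stabilizing then $W_{\hat M_k(\tau)}=L_i$. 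Since $(\exists\tau)\theta$ is $\Sigma_2^0$, the first conjunct is $(\forall i)(\forall\sigma)\Sigma_2^0=\Pi_3^0$; and since $W_{\hat M_k(\tau)}=L_i$ is $\Pi_2^0$, the second is $(\forall i)(\forall\tau)(\Sigma_1^0\vee\Pi_2^0)=\Pi_2^0$. Their conjunction is $\Pi_3^0$, and prefixing $(\exists k)$ yields the desired $\Sigma_4^0$ description.

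The step I expect to be the main obstacle is the normal form: verifying that the rearrangement-independent $\hat M_k$ learns exactly when $M_k$ does, and that density-plus-correctness is genuinely sufficient for identification. For arbitrary learners it is \emph{not} sufficient, since an adversarial in-order text can converge to an incorrect index while every one of its initial segments remains non-stabilizing. Getting the construction of $\hat M_k$ right—so that this pathology is excluded and the $\Pi_1^0$ stabilization predicate does the intended work—is the crux; the quantifier counting above is then routine, and the same template will scale to the $\Sigma_5^0$ analysis of $BCL$.
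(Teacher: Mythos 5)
Your approach is genuinely different from the paper's (which uses Theorem \ref{blum} to restrict attention to computable enumerations and then writes a direct $\Sigma_4^0$ formula in which texts are replaced by indices $k$ of partial computable functions $\phi_k$), but it has two gaps, both sitting exactly where you located the crux, and the second is fatal as stated. First, the normal form: the construction of Theorem \ref{finUp} does not preserve TxtEx-learning. That machine converges, on every text for $L$, to $M_k(\tau^*)$, where $\tau^*$ is the globally least string with content in $L$ on which $M_k$ outputs a hypothesis; this is the right behavior for TxtFin but not for TxtEx, where early hypotheses may be wrong. (If $M_k$ learns $\{\{0\},\{0,1\}\}$ by guessing $\{0\}$ until it sees a $1$, the finUp-style $\hat M_k$ converges to a code for $\{0\}$ on every text for $\{0,1\}$.) Theorem \ref{blum} cannot bridge this, since it concerns computable versus arbitrary texts, not order-dependence; what you actually need is Fulk's normal form theorem, itself a nontrivial locking-sequence construction that you would have to prove.

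Second, and more seriously, your key lemma is false for rearrangement-independent learners: (i) together with (ii) does not imply identification. Let $\hat M(\sigma) = a_0$ if $|\mbox{content}(\sigma)| = |\sigma|$ (i.e., $\sigma$ is repetition-free) and $\hat M(\sigma) = a_1$ otherwise; this depends only on content and length. Fix an infinite $L_i$ with $W_{a_1} = L_i$ and $W_{a_0} \neq L_i$. The stabilizing sequences for $\hat M$ on $L_i$ are exactly the strings with content in $L_i$ that contain a repetition (every extension of such a string again contains a repetition, while a repetition-free $\tau$ is disqualified by appending one of its own elements), so stabilizing sequences exist, all carry the single hypothesis $a_1$, and (i) and (ii) hold; nevertheless $\hat M$ converges to $a_0$ on every injective text for $L_i$ and fails to identify $L_i$. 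The flaw is in the step ``after which $\hat M_k$ is pinned to the value $\hat M_k(\tau)$'': to invoke rearrangement-independence you must exhibit a string $\tau\rho$ with $\mbox{content}(\rho) \subseteq L_i$ whose content \emph{and length} both match those of $T[n]$, and counting shows this requires $T[n]$ to contain at least as many repetitions as $\tau$, which never happens along an injective text. Under set-drivenness (hypothesis depending on content alone) your lemma would be true, but set-drivenness is a known proper restriction on TxtEx-learning, which is precisely why this obstruction is not removable by normalization alone and why the paper instead expresses per-text syntactic convergence on computable enumerations via Blum--Blum. Your quantifier counting, and the observation that all stabilizing sequences share a single hypothesis, are correct; it is the characterization itself that needs a correct, and substantially harder, replacement.
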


\begin{proof}  By Theorem \ref{blum} we need only consider computable enumerations when analyzing the complexity of EXL.  Further, observe that if there is a machine, $M$, that TxtEx-learns a family, there is a total machine, $\hat{M}$, that TxtEx-learns the same family.  Specifically, define $\hat{M}(\sigma)$ to be $M(\sigma \upto n)$ for the greatest $n$ such that $M(\sigma \upto n)$ converges within $|\sigma|$ computation stages and define $\hat{M}(\sigma) = 0$ if no such initial segment exists.  Suppose $e$ codes a $u.c.e.$ family $\{L_0, L_1, \ldots\}$.\par

We will define a formula which states that there is a learner such that for every enumeration and every set in the family, if the enumeration is total and enumerates the set, then eventually the hypotheses stabilize and a given hypothesis is either correct or the hypotheses have not yet stabilized.  Syntactically, this can be stated as follows:
\begin{equation}
(\exists a) (\forall k, i) \Big( (M_a \mbox{ is total}) \wedge (\phi_k \mbox{ is total}) \wedge (\phi_k \mbox{ enumerates } L_i) \rightarrow \psi(k,a,i) \Big)\label{exDescription}
\end{equation}
where we define $\psi(k,a,i)$ to be
\begin{align*}
(\exists s) (\forall t>s) &\Big(M_a(\phi_k \upto t)=M_a(\phi_k \upto s) \Big)\wedge (\forall n) \Big(W_{M_a(\phi_k \upharpoonright n)} = L_i \\
&\vee (\exists m>n) \big( M_a(\phi_k \upto m) \neq M_a(\phi_k \upto n) \big) \Big).
\end{align*}
The last formula is $\Delta_3^0$.  Thus, \eqref{exDescription} is $\Sigma_4^0$ and characterizes TxtEx-learning because, for any family coded by a number $e$ which satisfies formula \eqref{exDescription}, there is a learner whose hypotheses converge to correct hypotheses on every computable enumeration and if $e$ fails to satisfy formula \eqref{exDescription}, every learner must fail on some hypothesis for some set in the family.\par

We have, therefore, exhibited a $\Sigma_4^0$ description of EXL.\par
\end{proof}








To achieve the desired completeness result, we now prove that an arbitrary $\Sigma_4^0$ predicate can be reduced to EXL.  The proof utilizes the family described in Example \ref{bcNotEx}.  While not TxtEx-learnable, the family is learnable under more liberal descriptions of learning.  It is, in a sense, just barely not TxtEx-learnable.

\begin{Theorem}\label{exlHard}
EXL is $\Sigma_4^0$-hard.
\end{Theorem}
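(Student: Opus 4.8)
The plan is to reduce an arbitrary $\Sigma_4^0$ predicate to $EXL$ by a computable map $e \mapsto \mathcal{G}_e$, arranging that $\mathcal{G}_e$ is TxtEx-learnable precisely when the predicate holds. Since a $\Sigma_4^0$ predicate has the form $P(e) \leftrightarrow (\exists w)(\forall x)(\exists y)(\forall z)\, R(e,w,x,y,z)$, the outermost two quantifiers $(\exists w)(\forall x)$ resemble the quantifier structure governing TxtFin-learnability in Theorem~\ref{finHard}, while the inner $(\exists y)(\forall z)$ block is exactly the sort of condition Example~\ref{bcNotEx} exploits to separate TxtEx from TxtBC. The natural strategy is therefore to build $\mathcal{G}_e$ as a family whose columns come in $H$/$L$ pairs in the style of Example~\ref{bcNotEx}, superimposed on the labeling apparatus of Theorem~\ref{finHard}, so that the inner $\Sigma_2^0$ condition controls whether paired columns collapse (i.e., whether some $H_x = L_x$), and the outer $\Pi_2^0$-over-$\Sigma_1^0$ structure controls whether labels stabilize into a learnable configuration.

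The key steps, in order, would be as follows. First, I would invoke Theorem~\ref{exUp}, or rather its underlying Angluin characterization (Theorem~\ref{ang}, in the $u.c.e.$ form noted after its statement), to identify what TxtEx-learnability of $\mathcal{G}_e$ means combinatorially: the existence of a $u.c.e.$ family of finite ``tell-tale'' sets $\{F_i\}$ with $F_i \subseteq L_i$ and the property that no proper sublanguage sandwiched above $F_i$ lies in the family. This reframes the target as a statement about existence of tell-tales, which I can then match quantifier-by-quantifier to $P(e)$. Second, I would run a stagewise construction analogous to Stage~$s$ of Theorem~\ref{finHard}, using the progress measure $h_x^s$ to track verification of the inner quantifiers; when the relevant inner condition appears to succeed, paired columns are forced to agree (mimicking $H_i \subseteq L_i$ with $H_i = L_i$), and when it appears to fail, the construction spawns a fresh divergent set witnessing the absence of a tell-tale, exactly as in the Example~\ref{bcNotEx} argument that one cannot decide infiniteness of $W_i$ in the limit. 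Third, I would verify both directions: if $P(e)$ holds, exhibit the finite tell-tale family explicitly (built from the stable labels and the forced agreements) and conclude learnability via Angluin; if $\neg P(e)$, show that for every candidate tell-tale set the construction produces a sandwiched proper sublanguage in $\mathcal{G}_e$, defeating Angluin's condition and hence TxtEx-learnability.

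I expect the main obstacle to lie in correctly interleaving the two quantifier blocks so that the construction is genuinely effective and the family remains $u.c.e.$, while still making the inner $(\exists y)(\forall z)$ block behave like the ``is $W_i$ infinite'' test from Example~\ref{bcNotEx}. Concretely, the difficulty is that the inner $\Pi_2^0$ failure must be detected in a way that lets the construction commit to creating a divergent witness set without ever being certain the failure is permanent; this is the same tension between labeling and infinite-mirroring that Theorem~\ref{finHard} resolves with the functions $p_x^k$, but here it must be layered underneath the $H$/$L$ collapse mechanism, and one must ensure that a transient appearance of inner success followed by reversal does not irrevocably produce a tell-tale violation when $P(e)$ in fact holds. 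Managing this bookkeeping—guaranteeing that each set's ``withheld element'' behaves correctly across the alternating outcomes of all four quantifiers—is where the technical care will concentrate, and it is essentially the step that raises the complexity from the $\Sigma_3^0$ of Theorem~\ref{finHard} to the required $\Sigma_4^0$.
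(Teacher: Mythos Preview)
Your approach diverges substantially from the paper's, and in a direction that creates avoidable difficulty. The paper does \emph{not} reuse the labeling machinery of Theorem~\ref{finHard} at all. Instead it works directly with the $\Pi_3^0$-completeness of COINF: any $\Sigma_4^0$ predicate $P$ is rewritten as $(\exists x)\,(f(e,x)\in\mbox{COINF})$ for a computable $f$, and one can arrange moreover that $P(e)$ implies $f(e,x)\in\mbox{COINF}$ for cofinitely many $x$, while $\neg P(e)$ implies $f(e,x)\in\mbox{COF}$ for every $x$. Then the paper builds, for each pair $(n,e')$, a family $\mathcal{R}_{n,e'}$ of partial enumerations of the set $F_n$ from Example~\ref{bcNotEx}, using the block structure of $W_{e'}$ so that if $e'\in\mbox{COF}$ the full $F_n$ appears (together with finitely many finite subsets), while if $e'\in\mbox{COINF}$ only finite subsets appear. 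Assembling these over a strictly increasing subsequence of $f(e,\cdot)$ gives $\mathcal{H}_e$: when $\neg P(e)$ every $F_n$ shows up, so the non-learnable family $\mathcal{F}$ of Example~\ref{bcNotEx} sits inside $\mathcal{H}_e$; when $P(e)$, beyond some index only finite sets appear, and the remaining finite piece is handled by a single learner.

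What the paper's route buys is precisely the avoidance of the bookkeeping you flagged as the main obstacle. By pushing three quantifiers into the single $\Pi_3^0$-complete index set COINF, there is no need to interleave two constructions or to manage withheld elements across four alternating quantifier outcomes; the only dynamic is whether the intervals of $W_{e'}$ eventually merge (COF) or stay fragmented (COINF), and that controls cleanly whether the $F_n$'s materialize. Your plan to graft the $p_x^k$ mechanism underneath the $H/L$ collapse may be salvageable, but the difficulty you identified---committing to divergent witness sets on transient $\Pi_2^0$ evidence without spoiling learnability when $P(e)$ holds---is real and is exactly what the paper sidesteps. Note also a small slip: the inner block $(\exists y)(\forall z)$ is $\Sigma_2^0$, whereas the ``is $W_i$ infinite'' test of Example~\ref{bcNotEx} is $\Pi_2^0$; aligning these would require a polarity flip you have not spelled out.
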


\begin{proof}
 Let COINF be the index set of all codes for $c.e.$ sets which are coinfinite.  Since COINF is $\Pi_3^0$-complete, it suffices to prove that any predicate of the form $(\exists x) (f(e,x)\in \mbox{COINF})$ for a computable function $f$ can be reduced to EXL.  As in Example \ref{bcNotEx} from the preliminary section, let $H_e = \{ e+x : x\leq |W_e|\}$, $L_e = \{ e+x : x\in \omega \}$, and $\mathcal{F} = \{ H_0, L_0, H_1, L_1, \ldots \}$.  Fix a uniformly computable enumeration of $\mathcal{F}$ where $H_e$ is the $2e+1^{st}$ column of $\mathcal{F}$ and $L_e$ is the $2e^{th}$ column.  For notational convenience, we denote the $e^{th}$ column of $\mathcal{F}$ by $F_e$.  We will define a sequence of $u.c.e.$ families, $\mathcal R_{n,e}$, and choose a computable map $g$ so that $g(e,x)$ is a $\Sigma^0_1$ code for the $u.c.e.$ family $\mathcal{G}_{e,x}$ where, for $x\leq e$
$$\mathcal{G}_{e,x} = \bigcup_{n\in [e,x]} \mathcal R_{n,e}.$$

We construct the $u.c.e.$ families $\mathcal R_{n,e}$ simultaneously for $x < n < e$.  The family, $\mathcal R_{n,e}$, will consist of an infinite number of partial enumerations of $F_n$.  How complete the enumerations are will depend on whether $e\in \mbox{COF}$ or $e\in \mbox{COINF}$.\par

\medskip
\textbf{Stage 0:}  Let $\mathcal R_{n,e}$ be the empty set.\par

\textbf{Stage s:}  Suppose that $[i, i+j] \subseteq W_{e,s}$.  In this case, enumerate $F_{n,j}$, a finite partial enumeration of $F_n$, and the least natural number greater than $n/2$ into the $i^{th}$ column of $\mathcal R_{n,e}$.  Denote this last number by $n_0$.  We include $n_0$ in order to guarantee that the set is nonempty and, if it is a partial enumeration of $H_a$ or $L_a$, it will contain $a$.  Also, for all $k\in [i, i+j]$, enumerate all the elements in the $k^{th}$ column into the $i^{th}$ column and vice versa so that all the columns with indices between $i$ and $i+j$ are identical.

\medskip
There are two cases.  First, suppose $e\in \mbox{COF}$, then there are only finitely many distinct sets in $\mathcal R_{n,e}$; cofinitely many columns of $\mathcal R_{n,e}$ will be identical to $F_n$ and the rest will be finite subsets of $F_n$.  Thus $\mathcal{G}_{e,x}$ will consist of $F_n$ for $x\leq n\leq e$ together with some finite subsets of these sets.  When $e\in \mbox{COINF}$, $\mathcal R_{n,e}$ will contain only finite subsets of $F_n$ and so $\mathcal{G}_{e,x}$ will consist of a collection of finite sets, possibly infinitely many.

Based on $g(e,x)$ and given an arbitrary $\Sigma^0_4$ unary predicate $P$, we define a new map, $h$, which witnesses the reduction of $P$ to EXL.  Since $P$ is $\Sigma^0_4$, it is of the form $(\exists y) (Q(x,y))$ where $Q$ is a $\Pi^0_3$ predicate.  Let $r$ be a one-to-one and computable map witnessing the reduction of $Q$ to COINF.  In other words, $P(x) \leftrightarrow (\exists y) (r(x,y)\in \mbox{COINF})$.  Furthermore, we may assume that 
$$P(e) \rightarrow (\forall^{\infty} y) (r(e,y)\in \mbox{COINF})$$
and
$$\neg P(e) \rightarrow (\forall y) (r(e,y)\in \mbox{COF})$$

Let $s$ be such that for fixed $e$, $\{ s(e,y) \}_{y\in \mathbb N}$ is a computable, strictly increasing, subsequence of $\{ r(e,y) \}_{y\in \mathbb N}$.  The existence of such a subsequence is guaranteed by the fact that $r$ is one-to-one, implying that $\{ r(e,y) \}_{y\in \mathbb N}$ is an unbounded sequence.  For convenience, suppose that $s(e,-1)=0$ for all $e\in \mathbb N$.  Let $h$ be a computable function such that, for $e \in \mathbb N$, $h(e)$ is a code for the $u.c.e.$ family
$$\mathcal{H}_e = \bigcup_{y\in \omega} \mathcal{G}_{s(e,y),s(e,y-1)}.$$

If $\neg P(e)$, then $(\forall y) (r(x,y)\in \mbox{COF})$.  For each $n\in \omega$ there is a $y$ such that $s(e,y-1)\leq n\leq s(e,y)$.  $F_n\in \mathcal{G}_{s(e,y+1),s(e,y)}$, hence $\mathcal{F}\subseteq \mathcal{H}_e$.  Recalling that $\mathcal{F}$ is not TxtEx-learnable, we conclude that $\mathcal{H}_e$ is not TxtEx-learnable.\par

If $P(e)$, then $(\forall^{\infty} y) (r(e,y)\in \mbox{COINF})$ and hence $(\forall^{\infty} y) (s(e,y)\in \mbox{COINF})$.  Pick an $n_0$ such that $(\forall n\geq n_0) (s(e,n)\in \mbox{COINF})$.  For all $n\geq n_0$ $\mathcal{G}_{s(e,n+1),s(e,n)}$ will consist entirely of finite sets.  Furthermore, these sets will, by definition, contain no numbers less than $n_0$.  On the other hand, every set in $\mathcal{G}_{s(e,y+1),s(e,y)}$ for $y\leq n_0$ will contain a number less than or equal to $n_0$ or be finite.  Therefore the whole family is learnable as follows.  Let $M_0$ be a computable function which learns the finite family $\bigcup_{y < n_0+1} \mathcal{G}_{s(e,y),s(e,y-1)}$ and let $M_1$ be a computable function which learns the collection of all finite sets - in other words, a function which interprets the input it receives as a string and outputs a code for the content of that string.  Define
$$M(\sigma) = \begin{cases}
       M_0(\sigma) & n_0\in \mbox{content}(\sigma),\\
       M_1(\sigma) & n_0\notin \mbox{content}(\sigma).
\end{cases}$$

If $M$ is fed an enumeration for a set in the family, then either $n_0$ will eventually appear in the text or it will not.  In either case, the learner will eventually settle on a correct code for the set.\par
$ $\par
We have shown how to reduce an arbitrary $\Sigma^0_4$ predicate to EXL and we may conclude that EXL is $\Sigma^0_4$-hard.\par
\end{proof}

\section{TxtBC-Learning}\label{bcSec}

To prove the upper bound for BCL, we require a result of interest in its own right -- independent of the arithmetic complexity of BCL.

\begin{Theorem}\label{bcEnumBound}
Suppose that $\mathcal G$ is a $u.c.e.$ family.  Either $\mathcal G$ is TxtBC-learnable or, for each computable learner $M$, there is a $\Delta_2^0$ enumeration of a set in $\mathcal G$ that $M$ fails to TxtBC-identify.
\end{Theorem}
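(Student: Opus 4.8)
The plan is to fix a u.c.e.\ family $\mathcal G = \{G_0, G_1, \ldots\}$, assume it is \emph{not} TxtBC-learnable, fix an arbitrary computable learner $M$, and produce a $\Delta_2^0$ enumeration of some $G_i$ that $M$ fails to TxtBC-identify. The organizing notion is that of a \emph{BC-locking sequence} for $M$ on $G_i$: a string $\sigma$ with content$(\sigma)\subseteq G_i$ such that $W_{M(\sigma\tau)} = G_i$ for every $\tau$ with content$(\tau)\subseteq G_i$. The whole argument rests on first locating a single column $L = G_i$ on which $M$ has \emph{no} BC-locking sequence, and then exploiting that failure to drive a $\emptyset'$-oracle construction of the desired enumeration.

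First I would establish the dichotomy that yields such a column. If, to the contrary, $M$ possessed a BC-locking sequence on every $G_i$, I would argue that $\mathcal G$ is in fact TxtBC-learnable, contradicting the hypothesis. This step is the TxtBC-analog of the locking-sequence machinery behind Theorem \ref{blum}: one builds a learner $\hat M$ which, reading a text $f$, searches the data seen so far for a string $\rho$ with content inside content$(f\upto n)$ that behaves like a locking sequence relative to that data, and outputs $M(\rho)$. Since the genuine locking sequence for $G_i$ has finite content contained in $G_i$, it eventually becomes available, while every non-locking candidate is eventually discredited by a bad extension appearing in the text; so the chosen $\rho$ stabilizes and $\hat M$ names $G_i$ correctly on cofinitely many prefixes, as TxtBC-learning permits the index to vary. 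Having ruled this case out, I may fix a column $L = G_i$ with no BC-locking sequence, i.e.\ for \emph{every} $\sigma$ with content$(\sigma)\subseteq L$ there is an extension $\tau$, content$(\tau)\subseteq L$, with $W_{M(\sigma\tau)}\neq L$.

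Next I would build the enumeration $T$ by a $\emptyset'$-recursive construction interleaving two jobs: appending ``bad'' blocks that force wrong hypotheses, and appending fresh elements of $L$ so that content$(T)=L$. From the current prefix $\sigma$ (content $\subseteq L$) I search for a pair $(\tau,x)$ with content$(\tau)\subseteq L$ and $x$ lying in exactly one of $W_{M(\sigma\tau)}$ and $L$. Because $\mathcal G$ is u.c.e., both ``$x\in L$'' and ``$x\in W_{M(\sigma\tau)}$'' are $\Sigma_1^0$, hence $\emptyset'$-decidable, so this is a $\emptyset'$-effective unbounded search; the no-locking property guarantees a witness always exists, so the search always halts. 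I append such a $\tau$, so that $W_{M(\sigma\tau)}\neq L$ while content$(T)=L$, making the hypothesis at that prefix wrong; then I append the least element of $L$ not yet listed, and repeat. The resulting $T$ is computable from $\emptyset'$, hence $\Delta_2^0$, enumerates exactly $L$, and produces an incorrect hypothesis at infinitely many prefixes, so $M$ fails to TxtBC-identify $T$.

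The main obstacle is the interaction between the $\Delta_2^0$ bound and the fact that equality of two c.e.\ sets, ``$W_{M(\sigma\tau)}=L$'', is properly $\Pi_2^0$ and so not decidable from $\emptyset'$. The construction sidesteps this: it never needs to \emph{decide} equality, only to \emph{confirm inequality}, which is a $\Sigma_1^0(\emptyset')$ event, and the prior reduction to a column with no BC-locking sequence is exactly what makes every such confirmation terminate. Thus the delicate point is not the $\emptyset'$-construction itself but the preceding dichotomy --- that the absence of any learner forces a no-locking column --- where the Blum--Blum-style synthesis of $\hat M$ and the attendant content-versus-order bookkeeping for text learners must be carried out with care.
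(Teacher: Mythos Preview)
Your route differs from the paper's. The paper assumes $M$ succeeds on all $\Delta_2^0$ texts and runs a \emph{single computable} construction (with injury) that attempts to build a bad $\Delta_2^0$ text; the assumption forces it to stall, and a learner $\hat M$ is read off the stalled data. You instead argue the contrapositive via a locking-sequence dichotomy, handling the ``no locking sequence'' branch by a direct $\emptyset'$-recursive search. That branch is correct and pleasantly clean: confirming $x\in W_{M(\sigma\tau)}\triangle L$ and $\mathrm{content}(\tau)\subseteq L$ are $\emptyset'$-decidable, and the no-locking hypothesis guarantees termination.

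The gap is in the other branch, where you synthesize $\hat M$ ``Blum--Blum style'' from the assumption that every $G_i$ has a BC-locking sequence for $M$. In TxtEx the refutation test for a candidate $\rho$ is $M(\rho\tau)\neq M(\rho)$, a $\Sigma_1^0$ event that, once seen, persists; so genuine locking sequences are never refuted and non-locking ones are permanently discredited. In TxtBC the refutation test is $W_{M(\rho\tau)}\neq W_{M(\rho)}$, a $\Sigma_2^0$ event whose stage-$n$ approximation is not monotone. Two things go wrong. First, a genuine BC-locking sequence $\sigma^*$ can be \emph{spuriously} refuted at every stage: as $n$ grows you admit longer $\tau$, and $M(\sigma^*\tau)$ may be an index that enumerates $G_i$ so slowly that $W_{M(\sigma^*),n}$ and $W_{M(\sigma^*\tau),n}$ visibly disagree at stage $n$ even though both limit to $G_i$. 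Second, a candidate $\rho$ with $W_{M(\rho\tau)}=W_{M(\rho)}\neq G_i$ for all admissible $\tau$ is \emph{never} refuted by your test, yet is not a BC-locking sequence; $\hat M$ would sit on it forever and output the wrong set. So neither half of ``non-locking candidates are eventually discredited while the genuine one survives'' holds, and $\hat M$ need not converge to anything correct.

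This instability is exactly what the paper's construction is engineered to tame: it commits to specific witnesses $x_j^s\in W_{h_{j-1}^s,s}\triangle W_{h_j^s,s}$, retains them until they are provably ``wrong'' or ``not least,'' and manages the resulting injuries so that the whole approximation is $\Delta_2^0$ and, crucially, independent of the particular enumeration fed in. Your $\emptyset'$-branch elegantly bypasses all of that once a no-locking column is in hand, but the dichotomy that is supposed to deliver such a column is where the real work lives, and the sketch you give does not do it; filling it in essentially forces you to reinvent the paper's injury argument.
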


\begin{proof}
Fix a $u.c.e.$ family $\mathcal G = \{ G_0, G_1, \dots \}$.  We must prove the following disjunction.  Either:

\begin{enumerate}
\item there is a computable machine which TxtBC-learns $\mathcal G$ or
\item for any computable machine, $M$, either 
\begin{enumerate}
\item there is a $\Delta_2^0$ enumeration for a set $G \in \mathcal G$ on which $M$ stabilizes to an incorrect answer or
\item there is a $\Delta_2^0$ enumeration for a set $G \in \mathcal G$ on which $M$ never stabilizes to codes for a single set.
\end{enumerate}
\end{enumerate}

Assume that statement (2) is false.  We may then fix a learner, $M$, that fails to satisfy statements (2)(a) and (2)(b).  We shall demonstrate that, under this assumption, $\mathcal G$ is TxtBC-learnable by some machine, i.e.~statement (1) is true.  To accomplish this, we perform a construction starting from a computable enumeration, $g(0), g(1), \ldots$, of $G \in \mathcal G$ uniformly obtained from an enumeration of the family.  The construction will follow a strategy designed to produce a $\Delta_2^0$ enumeration witnessing statement (2)(b).  Our assumption that these constructions fail will ultimately yield a method we shall use to build a learner for the family.\par

We construct a $\Delta_2^0$ enumeration, $f$, in stages.  After stage $s$ has completed, the state of the enumeration is a finite partial function, $f_s$.  Let $k_s(0), \ldots, k_s(s)$ denote an increasing reordering of $g(0), \ldots, g(s)$.  In addition, we define a restraint function, $r_s(i)$, and a counter, $i_s$, which monitor the length of the enumeration and the initial segments of $f_s$ on which $M$ exhibits key behavior.  Specifically, $i_s$ counts the number of times $M$ appears to have output hypotheses coding distinct sets on $f_s$, and $r_s(i_s)$ is the length of $f_s$.  For $1\leq j < i_s$, define the hypothesis $h_j^s=M(f_s \upto r_s(j))$ and pick a least witness $x_j^s \in W_{h_j^s,s}\triangle W_{h_{j-1}^s,s}$.  We will call $h_j^s$ and $x_j^s$ the $j^{th}$ hypothesis and witness chosen at stage $s$, respectively.\par

\medskip

\textbf{Stage s+1:}  Let $f_s$, $r_s$, $i_s$, $k_s$, $x_{0}^s, \ldots, x_{i_s}^s$ and $h_{0}^s, \ldots, h_{i_s}^s$ be as obtained from stage $s$.  We shall refer to the preceding collectively as the variables.  Let the finite sequence $k_{s+1}(0),\ldots,k_{s+1}(s+1)$ be an increasing reordering of $g(0), \ldots, g(s+1)$.  Define a set of strings 
$$S(s+1) = \{ \alpha : (|\alpha|,y<s) \wedge(\mbox{content}(\alpha) \subseteq \{ k_{s+1}(0),\ldots,k_{s+1}(s+1) \})\}.$$  
We must consider two possible types of injury at the beginning of the stage.\par

First, suppose that $k_{s+1} \upto (s+1) \neq k_{s} \upto (s+1)$.  Let $j \leq s$ be the least number such that $k_{s+1}(j)\neq k_s(j)$.  Reset the variables to their states at the beginning of stage $j$ (for example, define $f_{s+1}$ to be $f_j$).\par

The second type of injury occurs when a witness is found either to be ``wrong" or ``not least".  We call a witness, $x_j^s$, ``wrong" if $x_j^s \not\in W_{h_j^s,s+1}\triangle W_{h_{j-1}^s,s+1}$ and ``not least" if the tuple $\langle x_j^s, f_s\upto r_s(j) \rangle$ is not the least member of the set
$$\{ \langle y, \alpha \rangle : (f_s\upto r_s(j-1) \prec \alpha) \wedge (y\in W_{h_j^s,s+1}\triangle W_{M(\alpha),s+1}) \wedge (\alpha \in S(s+1))\},$$
where the set is ordered lexicographically and $\alpha <_{llex} \beta$ if $|\alpha| < |\beta|$ or $|\alpha| = |\beta|$ and $\alpha$ is lexicographically less than $\beta$.  Let $j \in \mathbb N$ be least such that $x_j^s$ is either ``wrong" or ``not least" and make the following changes to the variables.

\begin{enumerate}
\item $i_{s+1} = j$.
\item $r_{s+1}(m) = r_s(m)$ for $m<j$ and undefined for $m\geq j$.
\item $f_{s+1} \upto r_s(j-1) = f_s \upto r_s(j-1)$, and $f_{s+1}(x)$ is undefined for $x \geq r_s(j-1)$.
\item Discard $x_{j}^s, \ldots, x_{i_s}^s$ and $h_{j}^s, \ldots, h_{i_s}^s$.
\end{enumerate}

Having dealt with all required injury, we proceed to the actions of the stage.  In particular, we search for the $<_{llex}$-least pair in the set
$$\{ \langle y, \alpha \rangle : (f_{s+1} \prec \alpha) \wedge (y\in W_{h_{i_s}^s,s+1}\triangle W_{M(\alpha),s+1}) \wedge (\alpha \in S(s+1))\}.$$

If a least such pair, $\langle y,\alpha \rangle$, is found, update the variables to reflect the successful search for an extension:

\begin{enumerate}
\item Increment $i_{s+1}$.
\item Extend $f_{s+1}$ to $\alpha$.
\item Define $r_{s+1}(i_{s+1})$ to equal the length of $f_{s+1}$.
\item Update $x_{j}^{s+1} = x_j^s$ and $h_{j}^{s+1} = h_{j}^s$ for $j < i_{s+1}$.
\item Define $x_{i_{s+1}}^{s+1} = y$ and $h_{i_{s+1}} = M(\alpha)$.
\end{enumerate}

On the other hand, if no such pair can be found, end the stage with no further changes.\par
\bigskip

Now suppose that $\lim_{s \rightarrow \infty} i_s = \infty$.  Then, for any $n$, the $n^{th}$ hypothesis and witness will be changed at most finitely many times.  Therefore, $\lim_{s \rightarrow \infty} x_n^s$ and $\lim_{s \rightarrow \infty} f_s\upto n$ exist for every $n$, in which case our construction has produced an enumeration of $G$ which is $\Delta_2^0$ and on which the hypothesis stream generated by $M$ includes hypotheses that code different sets infinitely often.  This is, of course, impossible since, by assumption, $M$ TxtBC-learns $\mathcal G$ from $\Delta_2^0$-enumerations.  Therefore, $\lim_{s \rightarrow \infty} i_s \not = \infty$.\par

The construction was performed using a computable and uniformly obtained enumeration $g(0),g(1),\ldots$ of $G$.  The purpose of using a computable enumeration was to ensure that $\lim_{s\rightarrow \infty} f_s$ was $\Delta_2^0$.  Because each pair of extension and witness are chosen in a canonical manner that is independent of the enumeration, any two instances of the construction will eventually select the same pair despite using different enumerations of $G$.  This can be proved inductively.  Suppose two different enumerations have produced two finite partial functions that agree on an initial, possibly empty, segment.  Take the first point of disagreement.  The choices of extension made at the point when the functions disagree cannot both be $<_{llex}$-least, therefore one will change at a subsequent stage.\par

Define a computable function $\psi$ such that $\psi(\sigma,s) = \tau$, where $\tau$ is the partial function that results from performing the construction on an initial segment, $\sigma$, of an enumeration after $s$ stages of computation.  Let $\hat{M}(\sigma) = M(\psi(\sigma,|\sigma|))$.  Fix an arbitrary enumeration $q(0), q(1), \ldots$ of $G$. Since $M$ TxtBC-learns $\mathcal G$ from $\Delta_2^0$-enumerations, there must be a longest partial function, $\alpha$, that is cofinitely often extended by $\psi(q\upto s,s)$.  For any $\beta$ such that content$(\beta) \subseteq G$, we have $W_{\hat{M}(\alpha)} = W_{\hat{M}(\alpha \hat{\ }\beta)} = G$.  Because $G$ is an arbitrary member of $\mathcal G$, $\hat{M}$ succeeds in TxtBC-learning $\mathcal G$.\par

Since we have proved that $\mathcal G$ is TxtBC-learnable assuming only that $\mathcal G$ is TxtBC-learnable from $\Delta_2^0$-enumerations, we have proved the desired claim.

\end{proof}

The above result allows us to place a bound on the complexity of the enumerations that must be considered when searching for an enumeration that witnesses a failure of TxtBC-learning.  The next result applies Theorem \ref{bcEnumBound} to obtain an upper bound on the complexity of BCL -- the first half of the completeness result for BCL.

\begin{Theorem}
BCL has a $\Sigma_5^0$ description.
\end{Theorem}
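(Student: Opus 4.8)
The plan is to mirror the structure of the $\Sigma_4^0$ description of EXL from Theorem \ref{exUp}, but with two essential modifications dictated by the nature of TxtBC-learning. First, where EXL could appeal to Blum and Blum (Theorem \ref{blum}) to restrict attention to computable enumerations, here I would instead invoke Theorem \ref{bcEnumBound}: a $u.c.e.$ family $\mathcal G$ is TxtBC-learnable if and only if it is TxtBC-learnable from $\Delta_2^0$ enumerations. The point of this reduction is that $\Delta_2^0$ enumerations can be coded by their (total) $\{0,1\}$-valued approximation functions together with a limit, so that quantifying over ``all $\Delta_2^0$ enumerations'' becomes quantifying over all total computable approximations $\{\phi_k\}$ whose pointwise limit exists and enumerates $L_i$. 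Crucially, the predicate ``$\phi_k$ is a total approximation whose limit exists and enumerates $L_i$'' is arithmetically bounded at a low level ($\Pi_3^0$ or so), so it contributes nothing beyond the dominant cost.

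The second modification is that the success condition for TxtBC-identification, unlike TxtEx-identification, does not require the hypothesis stream to stabilize to a single index; it requires only that cofinitely many hypotheses code the target set $L_i$. Thus in place of the inner formula $\psi(k,a,i)$ from Theorem \ref{exUp} I would use the TxtBC-identification clause directly: there exists a stage $n$ such that for all $m > n$ we have $W_{M_a(\phi_k \upharpoonright m)} = L_i$. I would then write the full description in the form
\begin{equation*}
(\exists a)(\forall k, i)\Big( (M_a \text{ total}) \wedge (E(k,i)) \rightarrow (\exists n)(\forall m>n)\big(W_{M_a(\phi_k \upharpoonright m)} = L_i\big)\Big),
\end{equation*}
where $E(k,i)$ abbreviates the low-complexity assertion that $\phi_k$ codes a $\Delta_2^0$ enumeration of $L_i$.

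The main work is the complexity count. The core matrix ``$W_{M_a(\phi_k\upharpoonright m)} = L_i$'' is a $\Pi_2^0$ condition on $m,a,k,i$ (equality of two $c.e.$ sets, one of which is a uniformly $c.e.$ column of the family), so $(\exists n)(\forall m>n)(\cdots)$ is $\Sigma_3^0$. Prefixing the hypothesis $E(k,i)$, which I expect to be at worst $\Pi_3^0$, and the conjunct ``$M_a$ total'' ($\Pi_2^0$), the implication remains $\Sigma_3^0$ (a $\Pi_3^0$ antecedent disjoined with a $\Sigma_3^0$ consequent collapses into $\Sigma_4^0$ at worst; I would verify it sits at $\Sigma_4^0$). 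The universal quantifier $(\forall k,i)$ over a $\Sigma_4^0$ matrix yields $\Pi_5^0$, and the outermost $(\exists a)$ then gives $\Sigma_5^0$. I then argue correctness exactly as in Theorem \ref{exUp}: if $e$ satisfies the formula, the witnessing $M_a$ (made total as in the EXL proof) TxtBC-learns $\mathcal G$ from every $\Delta_2^0$ enumeration, hence by Theorem \ref{bcEnumBound} it is TxtBC-learnable; conversely if $e$ fails the formula, then for every total $M_a$ some $\Delta_2^0$ enumeration of some $L_i$ is mis-identified, and Theorem \ref{bcEnumBound} again guarantees this suffices to witness non-learnability.

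The step I expect to be the genuine obstacle is pinning down the exact complexity of the antecedent $E(k,i)$ and confirming that combining it with the $\Sigma_3^0$ success clause does not push the implication above $\Sigma_4^0$, so that the two outer quantifiers land us precisely at $\Sigma_5^0$ rather than $\Sigma_6^0$. In particular I must be careful that ``the pointwise limit of $\phi_k$ exists'' ($\Pi_3^0$) and ``that limit equals $L_i$'' are arranged so their cost is absorbed into the bound rather than added to it; I would handle this by folding the enumeration-coding conditions into the matrix and collecting like quantifiers, using that a $\Sigma_3^0$ statement implied by a $\Pi_3^0$ hypothesis is $\Sigma_4^0$, before the final $(\forall k,i)(\exists a)$ block is applied.
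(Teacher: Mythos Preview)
Your approach is correct and shares the paper's overall strategy: invoke Theorem~\ref{bcEnumBound} to reduce to $\Delta_2^0$ enumerations, code these via total computable approximations, and count quantifiers. The difference lies in how the TxtBC-success condition is expressed. You use the direct clause $(\exists n)(\forall m>n)\big(W_{M_a(g\upharpoonright m)} = L_i\big)$, which (once $g\upharpoonright m$ is properly unwound through the approximation) is $\Sigma_3^0$. The paper instead splits success into two pieces: a $\Sigma_3^0$ formula $\psi$ asserting that on stabilised initial segments the hypothesis contents eventually agree, and a $\Pi_3^0$ formula $\phi$ asserting that every incorrect hypothesis is eventually superseded, on a stabilised segment, by one coding a different set. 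Their conjunction gives a $\Delta_4^0$ matrix, so the paper's implication sits at $\Delta_4^0$ rather than your $\Sigma_3^0$; both routes land at $\Sigma_5^0$ after the outer $(\exists a)(\forall f,i)$. Your formulation is more direct; the paper's two-part split mirrors the two failure modes (2)(a) and (2)(b) isolated in the proof of Theorem~\ref{bcEnumBound}, which is presumably why the author chose it.

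Two small points. First, you conflate the approximation with its limit when you write $M_a(\phi_k\upharpoonright m)$: since $\phi_k$ is your two-argument approximation rather than the enumeration itself, this does not type-check as written, and the claim that the matrix is $\Pi_2^0$ needs justification. To recover that bound you must unwind through the stages as the paper does, for instance via
\[
(\forall s)\Big[(\forall t>s)\big(f_t\upharpoonright m = f_s\upharpoonright m\big) \rightarrow W_{M_a(f_s\upharpoonright m)} = L_i\Big],
\]
where $f_s(x)=\phi_k(s,x)$; this is genuinely $\Pi_2^0$ in $m,a,k,i$, and under the hypothesis $E(k,i)$ it is equivalent to your intended statement. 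Second, your estimate that a $\Pi_3^0$ antecedent implying a $\Sigma_3^0$ consequent is ``$\Sigma_4^0$ at worst'' is loose: the implication is already $\Sigma_3^0$, since the negated antecedent is $\Sigma_3^0$. This only helps your count.
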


\begin{proof}
Let $M$ be an arbitrary learner and $i$ an index for a set in the family $\mathcal F = \{ F_0, F_1, \ldots \}$.  From a computable function, $f$, define a sequence of functions, $\{ f_s \}_{s \in \mathbb N}$, by $f_s(x) = f(s,x)$.  Let $\phi(M,f,i)$ be the formula
\begin{align*}
(\forall n,s) (&W_{M(f_s\upharpoonright n)} = F_i \vee (\exists n'>n) (\exists s'>s) (W_{M(f_{s'}\upharpoonright n')} \not = W_{M(f_s \upharpoonright n)} \\
&\wedge (\forall s''>s')(f_{s''} \upto n' = f_{s'} \upto n'))).
\end{align*}

In words, $\phi(M,f,i)$ asserts that for any stage, $s$, and initial segment, $f_{s}\upto n$, either the hypothesis $M(f_{s}\upto n)$ is correct or there is a later stage and longer initial segment on which the $\Delta_2^0$-enumeration has stabilized and on which $M$ outputs a code for a different set.\par

Define $\psi(M,f)$ to be the formula
$$(\exists n) (\forall n'>n) (\forall s) (W_{M(f_s \upharpoonright n)} = W_{M(f_s \upharpoonright n')} \vee (\exists s'>s) (f_{s'}\upto n' \not = f_s \upto n'))$$
and $\xi(f,i)$ to be
\begin{align*}
&(\forall n) (\exists s) (\forall t>s) (f_s(n) = f_t(n)) \\
\wedge &(\forall n,s) (\exists u,t>s) ((f_s(n) \in F_{i,u}) \vee (f_s(n) \neq f_t (n))) \\
\wedge &(\forall x,u) (\exists n,s) (\forall t>s) (x \in F_{i,u} \rightarrow f_s(n) = f_t(n) \wedge f_s(n) = x).
\end{align*}

If $\psi(M,f)$, then there is an initial segment of length $n$ such that for any stage, $s$, and greater length, $n'$, there are two possibilities.  One, the hypotheses $M$ outputs on $f_s \upto n$ and $f_s \upto n'$ code the same set.  Two, there is a subsequent stage, $s'$, at which the $n'$ length initial segment changes: $f_{s'}\upto n' \not = f_s \upto n'$.  The formula $\xi(f,i)$ asserts that $f_s$ converges to an enumeration of $F_i$ as $s$ goes to infinity.  In particular, $\lim_{s\rightarrow \infty} f_s(n)$ exists for all $n$ and $\lim_{s\rightarrow \infty} f_s(n) = x$ if and only if $x \in F_i$.

We must prove that the following is $\Sigma_5^0$ and equivalent to $e\in$ BCL, where $e$ codes a $u.c.e.$ family $\{F_0, F_1, \ldots \}$.
\begin{align}
(\exists M) (\forall f,i) (\xi(f,i) \rightarrow \psi(M,f) \wedge \phi(M,f,i)).\label{formula}
\end{align}

Observe that $\psi(M,f)$ is $\Sigma_3^0$.  The formula $\phi(M,f,i)$ universally quantifies over the disjunction of a $\Pi_2^0$ formula and a $\Sigma_2^0$ formula.  Thus, $\phi(M,f,i)$ is $\Pi_3^0$.  Since $\xi(f,i)$ is the conjunction of three $\Pi_3^0$ formulas, $\xi(f,i)$ is $\Pi_3^0$.  From this, we conclude that 
$$\xi(f,i) \rightarrow \psi(M,f) \wedge \phi(M,f,i)$$ 
is $\Delta_4^0$.  Consequently, \eqref{formula} is $\Sigma_5^0$.\par

To complete the proof, we must verify that any family that satisfies \eqref{formula} is TxtBC-learnable.  If $\xi(f,i)$, then $f$ converges to a $\Delta_2^0$ enumeration of $F_i$.  From $\psi(M,f)$, we have that there is an initial segment of the enumeration given by $f$ such that, on longer initial segments of $f$, either the output hypotheses code the same set, or the $\Delta_2^0$ enumeration has not yet stabilized.  Finally, $\phi(M,f,i)$ states that for any initial segment either the hypothesis output by the learner is correct or the learner will output a later hypothesis that is different on an initial segment of $f$ that has stabilized.\par

Thus, if \eqref{formula} is true, there is a computable learning machine $M$ such that for any $\Delta_2^0$ enumeration $f$, $M$ converges to consistent hypotheses on $f$ and, if it has not yet output a correct hypothesis, it will change the content of its hypothesis at a later stage.  This is clearly equivalent to TxtBC-learning $\mathcal F$ from $\Delta_2^0$-enumerations.  By Theorem \ref{bcEnumBound}, TxtBC-learning from $\Delta_2^0$-enumerations is equivalent to TxtBC-learning from arbitrary enumerations for $u.c.e.$ families.\par
\end{proof}

We present the lower bound in a modular fashion.  The construction describes an attempt to diagonalize against every possible learner, which succeeds only if a given $\Sigma_5^0$ predicate is false.  A single step of the diagonalization is proved as a lemma.

\begin{Lemma}\label{bcLemma}
Let $M = M_m$ be a computable learning machine and $W_e$ a $c.e.$ set.  There is a family $\mathcal F_{m,e}$, uniformly computable in $m$ and $e$, such that:
\begin{enumerate}
\item If $W_e$ is coinfinite, then $\mathcal F_{m,e}$ is not TxtBC-learnable by $M$, but the family is TxtBC-learnable.
\item If $W_e$ is cofinite, then $\mathcal F_{m,e}$ is uniformly TxtBC-learnable in both $m$ and $e$.

\end{enumerate}
\end{Lemma}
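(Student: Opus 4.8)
The plan is to build $\mathcal F_{m,e}$ by running a diagonalization against the single learner $M = M_m$ whose individual steps are metered by the apparent coinfiniteness of $W_e$, combining the extension search of Theorem~\ref{bcEnumBound} with the gadget of Example~\ref{bcNotEx}. The guiding idea is that each number observed to remain outside $W_e$ grants one ``permit'' to force $M$ into a fresh hypothesis-change, so that $W_e$ coinfinite yields infinitely many forced changes — hence a single enumeration of some member on which $M$ makes infinitely many incorrect conjectures, i.e.\ a TxtBC-failure — while $W_e$ cofinite shuts the process down after finitely many steps and leaves behind a trivially learnable family.

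Concretely, I would reserve a block of columns holding a ``live'' infinite target together with auxiliary columns carrying sibling sets in the style of $H_x$ and $L_x$ from Example~\ref{bcNotEx}. Enumerating $W_e$ stage by stage, whenever a new apparent gap in $W_e$ is detected the construction is permitted one diagonalization step: it searches, as in Theorem~\ref{bcEnumBound}, for an extension of the current enumeration of the live target on which $M$ is driven, through its obligation to TxtBC-identify one of the sibling sets, to output a code for a set different from the target; between permits the target is extended honestly so that its content converges to the intended set. If $W_e$ is coinfinite there are infinitely many permits, so the construction produces a $\Delta_2^0$ enumeration of a member of $\mathcal F_{m,e}$ on which the hypotheses of $M$ code different sets infinitely often, and $M$ fails to TxtBC-learn $\mathcal F_{m,e}$. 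If $W_e$ is cofinite there are only finitely many permits, the live target is never completed to an infinite set, and $\mathcal F_{m,e}$ is a finite collection of finite sets, which is uniformly TxtBC-learnable — for instance by the learner that outputs a code for the content seen so far, which uniformly identifies any family of finite sets.

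The delicate part is the rest of statement (1): that in the coinfinite case $\mathcal F_{m,e}$ is nevertheless TxtBC-learnable by some machine. I would supply a learner, allowed to depend on $m$ and $e$, that vacillates between the sibling sets exactly as the learner in Example~\ref{bcNotEx} does, using the ``clock'' provided by the construction to decide, cofinitely correctly on every enumeration of every member, which sibling is present. This is the main obstacle, and it is precisely the TxtEx/TxtBC gap isolated in Example~\ref{bcNotEx}: the sibling structure must be rich enough that, for an arbitrary $M$, each permit genuinely forces a new incorrect conjecture — falling back, whenever $M$ refuses to change its conjecture, on switching which sibling is declared infinite so that $M$'s own commitment becomes the error — yet poor enough that the designed learner can still settle cofinitely on a correct code for every member. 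The key constraint keeping the family learnable is that the installed finite sets must never accumulate into an infinite member together with a chain of finite subsets converging to it, since a short locking-sequence argument shows such a configuration is not TxtBC-learnable; the forcing must therefore keep the finite members spread out rather than converging to a single target. Arranging all of this while keeping the constructed enumeration $\Delta_2^0$ and the whole construction uniform in $m$ and $e$ is the heart of the proof.
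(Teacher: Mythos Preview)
The permitting scheme is where your proposal breaks down. You assert that cofinite $W_e$ grants only finitely many permits and hence $\mathcal F_{m,e}$ becomes a finite collection of finite sets, but an ``apparent gap'' at stage $s$ is merely a number not yet in $W_{e,s}$, and a cofinite $W_e$ may be enumerated so slowly that there are unboundedly many apparent gaps at every stage. Without a cancellation mechanism---which you do not supply---the number of permits issued need not be finite. Even granting such a bound, your own description has the target ``extended honestly'' between permits, so after the last permit the target would grow forever and be infinite; you cannot effectively recognize that a given permit is the last one, so the conclusion ``the live target is never completed to an infinite set'' does not follow from the construction described. The sibling-switching fallback for the case where $M$ refuses to change hypothesis is left as an acknowledged obstacle rather than resolved.

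The paper reverses the role of $W_e$. The diagonalization against $M$ runs unconditionally, and $W_e$ is used not to gate it but to \emph{undo} it. The diagnostic is not a change of hypothesis but a \emph{speculation}: an element $x\in W_{M(\sigma)}$ with $x\notin\mbox{content}(\sigma)$. Each speculation found is withheld from the main set $A$, indexed by its step number $j$, and released back into every set as soon as $j$ enters $W_e$. If $W_e$ is coinfinite and speculations keep appearing, infinitely many stay permanently withheld, so $M$ is wrong infinitely often on the natural enumeration of $A$; if $M$ eventually stops speculating beyond some $\sigma$, then every later hypothesis codes only a subset of what has been seen, and $M$ cannot TxtBC-learn the one auxiliary set $B_j$ that is then allowed to grow cofinite. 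If $W_e$ is cofinite, all but finitely many speculations are released, so for large $k$ the set $A\cup B_k$ (stripped of its tag column) equals $A$, and a single uniform learner can exploit this. The speculation device is precisely the missing idea that handles your ``$M$ refuses to change'' case without any sibling bookkeeping.
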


\begin{proof}
The construction will be performed in stages.  During the stages, steps of a diagonalization process will be attempted, although these steps may not be completed.  The diagonalization is against the learner $M$ and a step of the diagonalization will be complete when a string is found on which the learner outputs, as a hypothesis, a code for a set that includes an element not in the content of the enumeration it has been fed.  Such an element will be called a speculation.  To be explicit, we define a natural number, $x$, to be a \textit{speculation} of $M$ on input $\sigma$ if for some $s\in \mathbb N$, $x\in W_{M(\sigma),s}$ and $x\notin \mbox{content}(\sigma)$. \par 

We will build a family $\mathcal F_{m,e} = \{A, B_0, B_1, B_2, \ldots\}$.  At each step $i$, the set $B_i$ is initialized with the contents of the set $A$.  A set, $C$, of speculations will be maintained.  We reserve the $0^{th}$ and $1^{st}$ columns of each set for markers.  If $\langle 0,j \rangle \in B_i$, then $B_i$ is said to have been tagged with $j$.  Every set in the construction will contain $\langle 1,\langle 0,m\rangle\rangle$ and $\langle 1,\langle 1,e\rangle\rangle$ where $m$ is a code for $M$.  The rest of the construction occurs off the $0^{th}$ and $1^{st}$-columns and the $0^{th}$-column of $A$ is left empty.  We now proceed with the construction of $\mathcal F_{m,e}$.\par
\medskip

Fix $M$ and $W_e$.\par
\textbf{Stage 0:}  $C, A, B_0, B_1, \ldots$ are all empty.  Enumerate $0$ into $A$.  Set $\sigma_0 = 0$.\par

\textbf{Stage s:}  Suppose the first $i$ steps have been completed.  By $C, A, B_0, \ldots , B_{i+1}$ we mean those sets in their current state.  We are thus in the midst of step $(i+1)$.   Let $w_0, w_1, \ldots , w_i$ enumerate the current members of $C$, where the index reflects the order in which they were chosen.  Enumerate into each of the sets $A, B_0, \ldots , B_{i+1}$ those $w_j$ having $j\in W_{e,s}$.\par

Next, we search for the least speculation, $x\leq s$, of $M$ on input $\sigma_s \hat{\ } \alpha$, for some $\alpha$ with $|\alpha|\leq s$, $\max(\mbox{content}(\alpha))\leq s$, and $\mbox{content}(\alpha) \cap (C\setminus A) = \emptyset$.  If no speculation is found, pick the least number neither in $C\setminus A$ nor the marker columns and enumerate this number into $B_{i+1}$, after which we end the current stage of the construction.  If a speculation, $x$ witnessed by a string $\alpha$, is found, enumerate $x$ into $C$ and enumerate the members of $\{y: y\notin (C\setminus A) \wedge y\leq \mbox{max(content}(\alpha))\}$ into $A$.  Enumerate $\langle 0,i+1\rangle$ into $B_{i+1}$.  From this point on, only $W_e$ is allowed to enumerate anything further into $B_{i+1}$.  Step $i+2$ is now initiated by enumerating every element of $A$ into $B_{i+2}$.  Finally, we set $\sigma_{s+1} = \sigma_s \hat{\ } \alpha \hat{\ } \beta$, where $\beta$ is an increasing enumeration of $\{y: y\notin (C\setminus A) \wedge y\leq \mbox{max(content}(\alpha))\}$.  This ends the current stage of the construction.\par
\medskip

Observe that $C \setminus A$ are the speculations that, at the current stage, have not been enumerated into $A$.\par

For coinfinite $W_e$ there are two possibilities.  If infinitely many steps complete, there is a subsequence $\{\tau_s\}_{n\in \mathbb N}$ of $\{\sigma_s\}_{n\in \mathbb N}$ such that $W_{M(\tau_s)} \neq A$, for each $s\in \mathbb N$.  Since $\sigma_s$ and $\sigma_t$ are compatible for all $s,t \in \mathbb N$, the computable function $f(n) = \sigma_n(n)$ enumerates $A$.  Thus, we have an enumeration for a set in the family on which $M$ fails to converge to the correct set.  If only finitely many steps complete, then there is a string $\sigma$, equal to $\sigma_s$ for some $s\in \mathbb N$, that has no extension witnessing speculation by $M$.  The content of $\sigma$ is contained in the last nonempty $B_i$, and $B_i$ will become a cofinite set.  Since $M$ engages in no speculation beyond $\sigma$, $M$ must only output codes for finite sets, thus on any enumeration of $B_i$ that begins with the string $\sigma$, $M$ fails to TxtBC-learn~$B_i$.  \par

Depending on the outcome of the construction, but independent of $W_e$, we can define a learning machine $N_0$ that succeeds in TxtBC-learning $\mathcal F_{m,e}$.\par

\textit{Case 1:}  Suppose infinitely many steps of the construction complete.  Define $N_0$ to be a learner that outputs a code for $A$ on any input string unless the string contains $\langle 0,i \rangle$ for some $i$, in which case it outputs a code for $B_i$.  $N_0$ succeeds in TxtBC-learning $\mathcal F_{m,e}$.\par

\textit{Case 2:}  If the $j^{th}$-step is the last step initiated, define $N_0$ to be a learner that, on input $\sigma$, simulates the construction for $A$ and outputs a code for one of $A$, $B_0,B_1,\ldots ,B_j$.  If content$(\sigma) \subseteq A$ and $\langle 0,i \rangle \notin \mbox{content}(\sigma)$ for any $i < j$, $N_0(\sigma)$ codes $A$.  If $\langle 0,i\rangle\in \mbox{content}(\sigma)$, $N_0(\sigma)$ is a code for $B_i$.  Otherwise, $N_0(\sigma)$ is a code for $B_j$ and $N_0$ has TxtBC-learned $\mathcal F_{m,e}$.\par
$ $\par

Next, we define a machine that can learn $\bigcup_{e\in \mbox{COF}, m\in \mathbb N} \mathcal F_{m,e}$.  Fix $e\in \mbox{COF}$. To distinguish it from the completed set, let $A_s$ denote a simulation of the construction of $A$ at stage $s$.
$$N(\sigma) = \begin{cases}
       0  &\mbox{if }  \mbox{card}((\{1\}\oplus \mathbb N)\cap \mbox{content}(\sigma))\leq 1, \\
       N_{m,e}(\sigma) & \mbox{if } \langle 1,\langle0,m\rangle \rangle,\langle 1,\langle1,e\rangle \rangle\in \mbox{content}(\sigma).
\end{cases}$$

The $N_{m,e}$ will be defined below.  Each $N_{m,e}$ need only TxtBC-learn the family resulting from the construction based on $M$ and $W_e$.  For $i \in \mathbb N$, let $A^*,B_i^*$ and $(A\cup B_i \setminus (\{0\} \oplus \mathbb N))^*$ denote $\Sigma^0_1$-codes for $A, B_i$ and $A\cup B_i \setminus (\{0\} \oplus \mathbb N)$, respectively.  These codes can be computably derived from $m$ and $e$.  We define $N_{m,e}$ as follows:
$$N_{m,e}(\sigma) = \begin{cases}
	B_i^* & \mbox{if } \langle 0, i\rangle \in \mbox{content}(\sigma), \\
	A^* & \mbox{if } \langle 0, i\rangle \notin \mbox{content}(\sigma) \wedge \mbox{content}(\sigma)\subseteq A_{|\sigma|}, \\
	(A\cup B_k \setminus (\{0\} \oplus \mathbb N))^*  & \mbox{otherwise,}
\end{cases}$$
where $k$ denotes the greatest index of a set that has been used in the simulated construction up to stage $|\sigma|$.\par

To determine if $N_{m,e}$ TxtBC-learns the family, we must consider four cases, depending on the outcome of the construction and which set, $D \in \mathcal F_{m,e}$, is enumerated to $N_{m,e}$.\par

\textit{Case 1:}  Suppose $D$ has a tag on the $0^{th}$-column; in other words, there exists $i \in \mathbb N$ such that $D = B_i$.  If the construction completes $l < \infty$ steps, then $i < l$.  Otherwise, $D$ may be any of the $B_i$.  Once $\langle 0, i \rangle$ has appeared in the enumeration, the learner will hypothesize $B_i^*$ and never change hypothesis.\par

\textit{Case 2:}  Suppose $D = B_j$ where $j$ is the index of the final, but incomplete, step of the construction.  Since no $\langle 0, i\rangle$ will ever be enumerated into $B_j$, the first case of $N_{m,e}$ will never be satisfied.  Cofinitely, since $A$ is a finite set and $B_j$ is not, the second case will not be satisfied either.  ($A$ is finite because only finitely many steps of the construction complete.)  Thus, cofinitely, the learner will output $(A\cup B_k \setminus (0 \oplus \mathbb N))^*$, where $k$ is updated at each stage to reflect the most recent addition to the family during the construction.  Eventually $k$ will stabilize to $j$, after which time the learner's hypotheses will always be correct.\par

\textit{Case 3:}  Suppose $D = A$, where only finitely many steps complete.  Since $A$ is finite, for all but finitely many $s$, $A_s = A$ and we may replace $A_{|\sigma|}$ with $A$ in the second case of the definition of $N_{m,e}$.  Since no tag will ever be enumerated into the $0^{th}$-column of $A$, the first case will never be satisfied and eventually the second case will always be satisfied and $N_{m,e}$ outputs $A^*$ cofinitely.\par

\textit{Case 4:}  Finally, suppose $D = A$, where infinitely many steps complete.  Note that because the learner is receiving an arbitrary enumeration, there need not be any correlation between the enumeration given to the learner and the enumeration of the simulation $A_s$.  It is quite possible that the second case will be true only infinitely often.  The first case, however, is never satisfied.  All that remains is to prove that eventually the third case only produces correct hypotheses.  Since $e\in \mbox{COF}$, we may choose $s$ such that $[s, \infty) \subseteq W_e$.  The set $C \setminus A$ is finite and $(C \setminus A) \cap B_i = \emptyset$.  Thus, $A\cup B_i \setminus (0 \oplus \mathbb N) = A$ for $i \geq s$.\par
\medskip

Thus $N$ succeeds in TxtBC-learning the following, possibly non-$u.c.e.$, family
$$\bigcup_{e\in \mbox{COF}, m\in \mathbb N} \mathcal F_{m,e}.$$
and thus can TxtBC-learn any subfamily.\par
\end{proof}

\begin{Theorem}\label{bcLowBound}
BCL is $\Sigma_5^0$-hard
\end{Theorem}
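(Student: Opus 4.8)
The plan is to establish $\Sigma_5^0$-hardness of $BCL$ by reducing an arbitrary $\Sigma_5^0$ predicate to it, using Lemma \ref{bcLemma} as the single diagonalization step and assembling infinitely many such steps into one family. The structure should mirror the reduction in Theorem \ref{exlHard}: there, a $\Pi_3^0$-complete set (COINF) was the engine and families were glued together indexed by an increasing subsequence witnessing the outer existential; here the engine is Lemma \ref{bcLemma}, whose dichotomy is governed by whether $W_e$ is coinfinite ($\Pi_2^0$). I would first observe that a generic $\Sigma_5^0$ predicate has the form $P(n) \leftrightarrow (\exists x)(Q(n,x))$ where $Q$ is $\Pi_4^0$, and since $\Pi_4^0$ reduces to the index set of cofinite sets COF (which is $\Sigma_4^0$-complete, so its complement is $\Pi_4^0$-complete), I can fix a computable $r$ with $Q(n,x) \leftrightarrow r(n,x) \in \mathrm{COF}$.

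Second, I would arrange the reduction so that the outer existential over $x$ corresponds to searching for an index $m=m(x)$ of a learner together with a coinfinite-versus-cofinite test. The key asymmetry in Lemma \ref{bcLemma} is exactly what I want: when $W_e$ is coinfinite the family $\mathcal F_{m,e}$ defeats the \emph{specific} learner $M_m$ yet remains $\mathrm{TxtBC}$-learnable by \emph{some} machine, whereas when $W_e$ is cofinite it is uniformly learnable. So to make the whole family $\mathcal H$ non-learnable when $P(n)$ is false, I must defeat \emph{every} learner $M_m$ simultaneously; this forces a family of the form $\mathcal H_n = \bigcup_m \mathcal F_{m, e_m}$ where each block targets one learner. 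The marker columns built into Lemma \ref{bcLemma} (each set carries $\langle 1,\langle 0,m\rangle\rangle$ and $\langle 1,\langle 1,e\rangle\rangle$) are precisely what let the blocks coexist in one $u.c.e.$ family without interfering, and what let the global learner $N$ from the lemma's proof route each enumeration to the correct sub-learner $N_{m,e}$.

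Third, I would verify the two directions. If $\neg P(n)$, then $(\forall x)(r(n,x)\in\mathrm{COINF})$, so I can choose the $e$-indices feeding the blocks to be coinfinite for all $m$; by clause (1) of Lemma \ref{bcLemma} each learner $M_m$ fails on $\mathcal F_{m,e_m}\subseteq\mathcal H_n$, hence no learner succeeds and $\mathcal H_n\notin BCL$. If $P(n)$, then $r(n,x)\in\mathrm{COF}$ for all but finitely many $x$; I arrange that cofinitely many blocks use cofinite $W_e$, so by clause (2) those blocks are \emph{uniformly} learnable and handled by the machine $N$ constructed at the end of Lemma \ref{bcLemma}, while the finitely many exceptional blocks are absorbed by a finite patch (each is $\mathrm{TxtBC}$-learnable by clause (1)), yielding $\mathcal H_n\in BCL$. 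The marker columns again make the finite/cofinite split detectable by the learner so the two regimes can be combined.

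\textbf{The main obstacle} I anticipate is the glueing: Lemma \ref{bcLemma} only guarantees that each $\mathcal F_{m,e}$ defeats \emph{its own} learner $M_m$, and that the \emph{union} over all $m$ with cofinite $e$ is learnable — but I must ensure that in the $\neg P(n)$ case the infinitely many coinfinite blocks genuinely defeat \emph{all} learners at once without some block accidentally being learnable by another block's learner or the combined family collapsing into learnability through cross-block structure. The safeguard is the per-learner indexing together with the $\langle 0,\cdot\rangle$ and $\langle 1,\cdot\rangle$ marker columns, which keep the blocks set-theoretically disjoint off the markers and tag every set with the $(m,e)$ pair that produced it; this guarantees that a learner trying to succeed on $\mathcal H_n$ must in particular succeed on each $\mathcal F_{m,e_m}$, and the $m$-th learner provably cannot. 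Verifying this non-interference, and that the global combined learner $N$ still applies when only cofinitely many blocks are cofinite, is the delicate bookkeeping step, but it reduces to the uniformity already proved in Lemma \ref{bcLemma}.
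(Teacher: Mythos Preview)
Your overall architecture matches the paper's: diagonalize against each learner $M_m$ via Lemma~\ref{bcLemma}, glue the blocks using the marker columns, and in the positive case combine the uniform learner $N$ with a finite patch for the exceptional blocks. The gap is in the complexity bookkeeping. You assert that $\mathrm{COF}$ is $\Sigma_4^0$-complete, but $\mathrm{COF}$ is only $\Sigma_3^0$-complete (and $\mathrm{COINF}$ is $\Pi_3^0$-complete), so there is no computable $r$ with $Q(n,x)\leftrightarrow r(n,x)\in\mathrm{COF}$ for a general $\Pi_4^0$ predicate $Q$. With only the single index $m$, your family $\mathcal H_n=\bigcup_m\mathcal F_{m,e_m}$ encodes at best a $\Sigma_4^0$ condition (``$(\forall^\infty m)(e_m\in\mathrm{COF})$'' is $\Sigma_4^0$), so the argument as written would yield only $\Sigma_4^0$-hardness of $BCL$, not $\Sigma_5^0$-hardness.

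The paper closes exactly this gap by unpacking one more quantifier. Writing the $\Sigma_4^0$ predicate $\neg Q(n,x)$ as $(\exists y)\,R(n,x,y)$ with $R\in\Pi_3^0$ brings the inner reduction into the range of $\mathrm{COINF}$, at the cost of a second index: the family becomes $\mathcal G_n=\bigcup_{x,y}\mathcal F_{x,f(n,x,y)}$. A second subtlety then surfaces in the $P(n)$ case: even if cofinitely many $x$ satisfy $Q(n,x)$, a single exceptional $x$ might a priori contribute infinitely many $y$ with $f(n,x,y)\in\mathrm{COINF}$, which would wreck the finite-patch argument. The paper prevents this with a uniqueness normalization of the inner $\Sigma_4^0$ predicate, arranging that $\neg Q(n,x)\rightarrow(\exists!\,y)(f(n,x,y)\in\mathrm{COINF})$ while $Q(n,x)\rightarrow(\forall y)(f(n,x,y)\in\mathrm{COF})$. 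With these two repairs in place, your marker-based glueing and your finite-patch-plus-$N$ learner go through exactly as you describe.
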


\begin{proof}
We wish to reduce an arbitrary $\Sigma^0_5$ predicate $P(e)$ to BCL.  For an arbitrary $\Sigma_4^0$ predicate $Q(e)$, there is a $\Sigma_2^0$ predicate, $R(e,x,y)$, such that the following representation can be made:
\begin{align*} 
Q(e) &\leftrightarrow (\exists a) (\forall b) (R(e,a,b))  \\
&\leftrightarrow (\exists \langle a,s\rangle) [((\forall b) (R(e,a,b)))\wedge ((\forall a'<a) (\exists s'\leq s) (\neg R(e,a',s'))) \\
&\indent \wedge ((\exists a'<a) (\forall s'<s) (R(e,a',s')))] \\
&\leftrightarrow (\exists! \langle a,s\rangle) [((\forall b) (R(e,a,b)))\wedge ((\forall a'<a) (\exists s'\leq s) (\neg R(e,a',s'))) \\
&\indent \wedge ((\exists a'<a) (\forall s'<s) (R(e,a',s')))].
\end{align*}
Since the predicate 
$$((\forall b) (R(e,a,b)))\wedge ((\forall a'<a) (\exists s'\leq s) (\neg R(e,a',s')))\wedge ((\exists a'<a) (\forall s'<s) (R(e,a',s')))$$
is $\Pi^0_3$, for a suitable computable function $g$,
$$Q(e) \rightarrow (\exists! x) (g(e,x)\in \mbox{COINF})$$
and
$$\neg Q(e) \rightarrow (\forall x) (g(e,x)\in \mbox{COF}).$$

Applying the above to $P(e)$, the arbitrary $\Sigma_5^0$ predicate under consideration, we may define a computable function $f$ such that

\begin{align*}
&P(e)\rightarrow (\exists x) [ (\forall x'>x) (\forall y) (f(e,x',y)\in \mbox{COF}) \\
&\indent \indent \indent \wedge (\forall x' \leq x) (\exists^{\leq 1} y) (f(e,x',y)\in \mbox{COINF}) ]
\end{align*}
and
$$\neg P(e)\rightarrow (\forall x) [(\exists! y) (f(e,x,y)\in \mbox{COINF}) ].$$

We will now define a family $\mathcal G_e$ from $e$ such that $\mathcal G_e$ will be learnable if and only if $P(e)$.  Define
$$\mathcal G_e = \bigcup_{x,y\in \mathbb N} \mathcal F_{x,f(e,x,y)}.$$

\textit{Case 1:}  Suppose $\neg P(e)$.  Then for every $x$, there is a $y$ for which $f(e,x,y)\in \mbox{COINF}$.  From this we conclude that for each computable learner, $M$ coded by $m$, there is a $y$ such that $f(e,m,y)\in \mbox{COINF}$.  $\mathcal G_e$ contains a subfamily, $\mathcal F_{m,f(e,m,y)}$, that $M$ cannot TxtBC-learn.  Thus, $\mathcal G_e$ is not TxtBC-learnable.\par

\textit{Case 2:}  Suppose $P(e)$ and let $x_0$ be such that $(\forall x\geq x_0) (\forall y) (f(e,x,y)\in \mbox{COF})$.  Let $a_0, a_1,\ldots , a_k$ enumerate the numbers less than $x_0$ such that, for unique corresponding $b_0, b_1,\ldots , b_k$, we have $f(e,a_i,b_i)\in \mbox{COINF}$ and let $K_i$ be a computable machine that learns $\mathcal F_{a_i,f(e,a_,b_i)}$.  The existence of such a machine is guaranteed by Lemma \ref{bcLemma}.  Using the machine $N$ from the proof of Lemma \ref{bcLemma}, define a computable machine $M$ on input string  $\sigma$ by
$$M(\sigma) = \begin{cases}
	K_i(\sigma) & \mbox{if } \langle 1,\langle0,a_i\rangle \rangle,  \langle 1,\langle1,b_i\rangle \rangle\in \mbox{content}(\sigma) \mbox{ for } i\leq k, \\
	N(\sigma) & \mbox{otherwise.}
\end{cases}$$

If an enumeration of a set in the subfamily $\mathcal F_{a_i,f(e,a_i,b_i)}$ is fed to $M$, then eventually a tag in the $1^{st}$-column will appear identifying it as such.  Cofinitely often, the appropriate $K_i$ will be used to learn the enumeration.  If the enumeration is for a set from $\mathcal F_{x,f(e,x,y)}$ with either $x\not = a_i$ or $y\not = b_i$ for any $i\leq k$, then $N$ will be used.  From Lemma \ref{bcLemma}, it is known that $N$ is capable of TxtBC-learning $\mathcal F_{x,f(x,y)}$ for any $x$ provided that $y\in \mbox{COF}$.\par

We conclude that BCL is $\Sigma_5^0$-hard.\par
\end{proof}

\section{TxtEx$^*$-learning}

Our final collection of results borrows from the BCL lower bound arguments as well as the EXL description, given in Section \ref{bcSec} and Section \ref{exSec}, respectively.  We begin with a $\Sigma_5^0$ description of EXL$^*$.

\begin{Theorem}\label{ex*Up}
EXL$^*$ has a $\Sigma_5^0$ description.
\end{Theorem}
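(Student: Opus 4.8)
The plan is to produce a $\Sigma_5^0$ formula for EXL$^*$ by closely imitating the structure of the $\Sigma_5^0$ description of BCL, replacing the ``behaviorally correct'' success condition with the ``explanatory-up-to-finite-difference'' condition. Since we are working with a $u.c.e.$ family coded by $e$, I would first invoke Corollary \ref{blum*} (the TxtEx$^*$ analog of Blum and Blum): it suffices to analyze learnability from a restricted class of enumerations. Concretely, I would reduce to considering $\Delta_2^0$ enumerations exactly as in the BCL upper bound, using the limit-presentation device $f_s(x) = f(s,x)$ so that the quantifier ``there exists a $\Delta_2^0$ enumeration'' becomes a quantifier over a single computable function $f$ together with the stabilization predicate $\xi(f,i)$ asserting that $\lim_s f_s$ is a total enumeration of $F_i$. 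As observed in Section \ref{bcSec}, $\xi(f,i)$ is $\Pi_3^0$.

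Next I would write down the success condition for TxtEx$^*$. The key difference from the BCL case is that TxtEx$^*$ requires the hypothesis stream to \emph{converge} (in the limit sense) to a single index, and that final index need only code a set with finite symmetric difference from $F_i$. So the formula will have two conjuncts, paralleling $\psi$ and $\phi$ from the BCL proof. The first conjunct, a convergence clause analogous to $\psi(M,f)$, asserts that along the stabilized $\Delta_2^0$ enumeration the learner's hypotheses eventually settle to a single value; this is essentially the same $\Sigma_3^0$ statement used in the BCL proof (or the $\Delta_3^0$ limit clause from the EXL proof in Theorem \ref{exUp}). The second conjunct replaces the correctness clause with the weaker demand that the limiting hypothesis $n$ satisfy $W_n =^* F_i$. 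Since $W_n =^* F_i$ unpacks as $(\exists b)(\forall x > b)(x \in W_n \leftrightarrow x \in F_i)$, which is a $\Sigma_3^0$ predicate, the matrix of this conjunct, universally quantified over initial segments with an ``or the hypotheses have not yet stabilized'' escape clause exactly as in $\phi(M,f,i)$, remains within $\Pi_3^0$ after accounting for the stabilization guard. Putting these together, the whole implication $\xi(f,i) \rightarrow (\text{convergence} \wedge \text{finite-difference correctness})$ is $\Delta_4^0$, so prefixing $(\exists M)(\forall f,i)$ yields a $\Sigma_5^0$ formula.

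To finish, I would verify the two directions of equivalence with $e \in \text{EXL}^*$, mirroring the final paragraphs of the BCL upper-bound proof. If the formula holds, the witnessing machine $M$ TxtEx$^*$-learns the family from every $\Delta_2^0$ enumeration, since on each such enumeration its hypotheses stabilize and the limiting index codes a set finitely differing from the target; Corollary \ref{blum*} then upgrades this to TxtEx$^*$-learning from arbitrary enumerations. Conversely, any TxtEx$^*$-learner for the family in particular succeeds on all $\Delta_2^0$ enumerations and hence witnesses the formula.

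The main obstacle I anticipate is bookkeeping the complexity of the finite-difference clause so that the overall bound stays at $\Sigma_5^0$ rather than creeping to $\Sigma_6^0$. The predicate $W_n =^* F_i$ is one quantifier more complex than the exact equality $W_n = F_i$ used for BCL, so I must be careful that this extra $\Sigma_3^0$ correctness condition, once folded into the guarded universal clause $\phi$, still collapses to $\Pi_3^0$ rather than escalating the arithmetic level. The reason it does not escalate is that the existential witness $b$ for the finite difference can be absorbed: the clause reads ``for all initial segments, either the limiting hypothesis (which exists by the convergence conjunct) codes a set agreeing with $F_i$ above some threshold, or the enumeration has not yet stabilized,'' and the threshold quantifier can be pulled to the front of the already-present stabilization machinery without raising the count. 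Confirming this complexity accounting is the technical crux; everything else transfers verbatim from the BCL argument with TxtBC replaced by TxtEx$^*$.
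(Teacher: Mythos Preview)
Your approach differs from the paper's in two ways, and the second is where your bookkeeping wobbles.

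First, the paper follows the EXL template (Theorem \ref{exUp}), not the BCL template. Since Corollary \ref{blum*} already reduces TxtEx$^*$-learning to learning from \emph{computable} enumerations, the $\Delta_2^0$ limit-presentation device $f_s(x)=f(s,x)$ and the predicate $\xi(f,i)$ from the BCL proof are unnecessary here; the paper simply quantifies over codes $k$ for total computable $\phi_k$, exactly as in Theorem \ref{exUp}. Your detour through $\Delta_2^0$ enumerations is not wrong (computable enumerations are $\Delta_2^0$, so the equivalence still goes through via Corollary \ref{blum*}), just heavier than needed.

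Second, and this is the real point of comparison, the paper does not leave $W_n =^* L_i$ as a raw $\Sigma_3^0$ block inside the correctness clause. Instead it introduces an explicit existential quantifier over canonical finite-set codes: the formula is $(\exists a)(\forall k,i)(\exists \ell)[\,\cdots\, W_{M_a(\phi_k \upharpoonright n)} \triangle L_i = D_\ell \,\cdots\,]$. Because $D_\ell$ is a fixed finite set, the equality $W \triangle L_i = D_\ell$ is $\Pi_2^0$, the same complexity as $W = L_i$, so the inner matrix $\psi(k,a,i,\ell)$ remains $\Delta_3^0$ exactly as in the EXL description. The single new $(\exists \ell)$, inserted between $(\forall k,i)$ and the matrix, is precisely what lifts the description from $\Sigma_4^0$ to $\Sigma_5^0$.

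Your ``absorption'' paragraph is gesturing at this same move, but as you have written $\phi$, it is $(\forall n,s)[\Sigma_3^0 \vee \Sigma_2^0]$, which is $\Pi_4^0$, not $\Pi_3^0$; the existential bound $b$ sits inside a universal and cannot be pulled out by syntax alone. This is not fatal to your overall count, since $(\exists M)(\forall f,i)(\Pi_4^0)$ is still $\Sigma_5^0$, but your claim that the implication is $\Delta_4^0$ is off by one level. The clean way to make the absorption rigorous is exactly the paper's device: name the finite symmetric difference once with an $(\exists \ell)$ outside the per-segment universal, so that the inner equality stays $\Pi_2^0$.
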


\begin{proof}
Suppose $e$ is a code for a $u.c.e.$ family $\{L_0, L_1, \ldots\}$.  By Corollary \ref{blum*}, a family that is TxtEx$^*$-learnable from computable enumerations is TxtEx$^*$-learnable from arbitrary enumerations.  Further, observe that if there is a machine, $M$, that TxtEx$^*$-learns a family, there is a total machine, $\hat{M}$, that TxtEx$^*$-learns the same family.  Specifically, define $\hat{M}(\sigma)$ to be $M(\sigma \upto n)$ for the greatest $n$ such that $M(\sigma \upto n)$ converges within $|\sigma|$ computation stages and define $\hat{M}(\sigma) = 0$ if no such initial segment exists.  We proceeed with a formula nearly identical to the description of TxtEx-learning.  Let $D_0, D_1, \ldots$ be a canonical, computable enumeration of the finite sets.  Consider the formula
\begin{align}
(\exists a) (\forall k, i) (\exists \ell) \Big( (M_a \mbox{ is total}) \wedge (\phi_k \mbox{ is total}) \wedge (\phi_k \mbox{ enumerates } L_i) \rightarrow \psi(k,a,i,\ell) \Big)\label{ex*description}
\end{align}
where we define $\psi(k,a,i,\ell)$ to be
\begin{align*}
(\exists s) (\forall t>s) &\Big( M_a(\phi_k \upto t)=M_a(\phi_k \upto s) \Big)\wedge (\forall n) \Big( W_{M_a(\phi_k \upharpoonright n)} \triangle L_i = D_\ell \\
&\vee (\exists m>n) \big( M_a(\phi_k \upto m) \neq M_a(\phi_k \upto n)\big) \Big).
\end{align*}

The only difference between the above formula and that of Theorem \ref{exUp} is an additional existential quantifier over finite sets.  Just as before, if a family satisfies formula \eqref{ex*description} then there is a computable machine that identifies every computable enumeration for a set in the family.\par

\end{proof}

\begin{Lemma}\label{ex*Lemma}
Let $M = M_m$ be a computable learning machine and $W_e$ a $c.e.$ set.  There is a family $\mathcal F_{m,e}$, uniformly computable in $m$ and $e$, such that:
\begin{enumerate}
\item If $W_e$ is coinfinite, then $\mathcal F_{m,e}$ is not TxtEx$^*$-learnable by $M$, but the family is TxtEx$^*$-learnable.
\item If $W_e$ is cofinite, then $\mathcal F_{m,e}$ is uniformly TxtEx$^*$-learnable in both~$M$ and~$e$.

\end{enumerate}
\end{Lemma}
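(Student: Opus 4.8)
The plan is to take over the construction of Lemma~\ref{bcLemma} essentially wholesale -- the same family $\mathcal F_{m,e}=\{A,B_0,B_1,\dots\}$, the same speculation-driven diagonalization against $M=M_m$, the same tags $\langle 0,i\rangle$ recording completed steps, and the same dependence of $A$ on $W_e$ -- and to change only two things to fit the $=^{*}$ criterion. First, invoking Corollary~\ref{blum*}, I argue only about computable enumerations. Second, in the stages where no speculation is found I keep pouring fresh elements (those not currently reserved as withheld speculations, i.e.\ those outside $C\setminus A$) into $A$ as well as into the active $B_i$, so that when $W_e$ is cofinite every infinite set the construction leaves untagged becomes cofinite within one computable ambient set $U$, and is therefore $=^{*}U$. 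This second change is forced by the feature distinguishing $\mathrm{TxtEx}^*$ from $\mathrm{TxtBC}$: the learner must converge to a single index.

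For the failure half (the coinfinite case) a single speculation no longer defeats $M$, but the infinitely many speculations produced across the stages do. If infinitely many steps complete, the pairwise compatible strings fuse into a computable enumeration $f(n)=\sigma_n(n)$ of $A$, and because $W_e$ is coinfinite, infinitely many of the speculations placed into $M$'s successive hypotheses are withheld from $A$ forever. Along $f$ the machine $M$ therefore either fails to converge to an index, which already defeats $\mathrm{TxtEx}^*$, or converges to some $n$, in which case infinitely many permanently withheld speculations lie in $W_n$, so $W_n\neq^{*}A$. If only finitely many steps complete, the final untagged set is infinite while $M$ conjectures only codes for finite sets past some $\sigma$, so $M$ converges to no index coding a set $=^{*}$ that set. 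Either way $M$ does not $\mathrm{TxtEx}^*$-learn $\mathcal F_{m,e}$.

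For the learnability half the tagged members are trivial: as soon as a tag $\langle 0,i\rangle$ surfaces the learner outputs the exact code $B_i^{*}$ and converges. The pouring modification makes the untagged members equally trivial. When $W_e$ is cofinite, every untagged member is $=^{*}U$, so the learner that outputs $U^{*}$ on every untagged input converges to a correct hypothesis for all of them, uniformly in $m$ and $e$; this gives part~(2). When $W_e$ is coinfinite the untagged members all coincide with the single infinite set $A$ (the final incomplete step, if any, having been filled to equal $A$), so the learner that outputs the exact code $A^{*}$ on every untagged input converges; this, together with the failure argument, gives part~(1).

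I expect the main obstacle to be making these two halves compatible, since the convergence demand of $\mathrm{TxtEx}^*$ pulls against the diagonalization. In $\mathrm{TxtBC}$ the learner of Lemma~\ref{bcLemma} may change its index infinitely often and the background set may be left finite; for $\mathrm{TxtEx}^*$ neither is allowed, so I must verify that continuing to fill $A$ (i) still leaves infinitely many speculations permanently withheld whenever $W_e$ is coinfinite, so that the limit hypothesis has \emph{infinite} symmetric difference with $A$ rather than a single discrepancy, and (ii) collapses all the untagged infinite sets into a common $=^{*}$-class coded by one fixed index when $W_e$ is cofinite, without creating any untagged set that a single hypothesis cannot track. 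Checking that the filling interacts correctly with the withheld-speculation set $C\setminus A$ -- infinite exactly when $W_e$ is coinfinite and finite exactly when $W_e$ is cofinite -- is the technical heart of the argument.
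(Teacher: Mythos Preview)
Your adaptation of the speculation construction from Lemma~\ref{bcLemma} has a genuine gap in the ``infinitely many steps complete'' branch of the coinfinite case. A speculation at step $i$ is a single element $w_i\in W_{M(\sigma_s\hat{\ }\alpha)}\setminus\mbox{content}(\sigma_s\hat{\ }\alpha)$, and nothing in the construction forces the $w_i$ to be distinct: once $x$ is placed into $C\setminus A$ it is excluded from all later contents, so if $M$ has converged to an index $n$ the very same $x$ can be rediscovered as the least speculation at every subsequent step. Your sentence ``infinitely many permanently withheld speculations lie in $W_n$, so $W_n\neq^{*}A$'' then collapses to ``one permanently withheld element lies in $W_n$'', which is perfectly compatible with $W_n=^{*}A$. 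For TxtBC this never mattered, because one wrong element per hypothesis already defeats the learner; for TxtEx$^*$ you must exhibit an \emph{infinite} symmetric difference with the single limiting hypothesis, and accumulating one-element errors across steps does not deliver that when the errors coincide.

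The paper does not try to repair this; it abandons speculations entirely and diagonalizes against mind changes instead. The family is $\{A,L_1,R_1,L_2,R_2,\ldots\}$. At each stage one searches for an extension on which $M$ changes its hypothesis; if found, extend $\sigma_s$ and cancel any active pair; if not, activate (or continue building) a pair $L_n,R_n$ that share $\mbox{content}(\sigma_s)$ but diverge on fresh even/odd elements, with $W_e$ controlling how many of these paired elements are later merged into both sets. If $W_e$ is coinfinite, either $M$ changes hypothesis infinitely often along the resulting enumeration of $A$, or some pair stays active forever and then $L_n\triangle R_n$ is infinite while $M$ outputs the same index on enumerations of each---so $M$ is $=^{*}$-wrong on at least one of them. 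The cofiniteness of $W_e$ collapses any permanently active pair to two sets each $=^{*}\mathbb N$ off the marker columns, which a single fixed index handles. The pair device is exactly what replaces your ``pouring'' idea: rather than hoping one set accumulates infinitely many withheld errors, it manufactures two sets whose infinite disagreement is built in.
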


\begin{proof}
Fix a machine $M = M_m$ and a $c.e.$ set $W_e$.  We will construct a family, $\mathcal F_{m,e} =\{A,L_1, R_1, L_2, R_2, \ldots \}$ in stages.  Each set in $\mathcal F_{m,e}$ will have two columns ($\{ \langle 0,x \rangle : x\in \mathbb N \}$ and $\{ \langle 1,x \rangle : x\in \mathbb N \}$) reserved for markers.  Every set in $\mathcal F_{m,e}$ contains $\langle 0,\langle m,0 \rangle \rangle$ and $\langle 0,\langle e,1 \rangle \rangle$ and $A$ contains the marker $\langle 0,\langle 0,3 \rangle \rangle$ as well.  Unless otherwise indicated, any action during the construction is performed on the complement of the reserved columns.  We identify this complement with $\mathbb N$ as it is a computable copy.  At any stage of the construction, at most one pair of sets, $L_n$ and $R_n$, will be actively involved in the construction.  When there is such a pair, we call it the active pair and maintain an associated function, $r_n$, which stores information about that pair.\par

\medskip

\noindent \textbf{Stage 0:}  Search for the least string, $\sigma$, on which $M$ outputs a hypothesis.  Set $\sigma_0$ equal to $\sigma$ and enumerate content$(\sigma_0)$ into $A$.\par

\medskip

\noindent \textbf{Stage s+1:}  Let $\sigma_0 \prec \ldots \prec \sigma_s$ be the sequence of strings passed to the current stage from stage $s$.  If there is a currently active pair, $L_n$ and $R_n$, then for $j\in W_{e,s+1}$, we enumerate $r_n(j)$ and $r_n(j) + 1$ into both $L_n$ and $R_n$.  We then consider four cases depending on the status of two parameters.  First, the existence of an active pair of sets.  Second, the availability, within computational bounds, of an extension, $\alpha$, of $\sigma_s$ on which $M$ outputs a hypothesis different from its most recent hypothesis.  We only consider the finite set of strings $S = \{ \sigma_s \hat{\ } \tau : (|\tau|<s+1) \wedge (\mbox{content}(\tau) \subset s+1) \}$ in our search for $\alpha$.\par

\smallskip

\textit{Case 1:}  Suppose there is no active pair, but there is an extension, $\alpha \in S$, of $\sigma_s$ such that $M(\alpha) \neq M(\sigma_s)$.  We pick the least such $\alpha$.  Set $\sigma_{s+1} = \alpha \hat{\ } \beta$, where $\beta$ is an increasing enumeration of $\{ x : x \leq \mbox{max(content}(\alpha)) \}$, and enumerate content$(\sigma_{s+1})$ into $A$.\par

\textit{Case 2:}  Next, consider the case where there is neither an active pair nor an $\alpha \in S$ such that $M(\alpha) \neq M(\sigma_s)$.  Let $n\in \mathbb N$ be least such that $L_n$ and $R_n$ have not yet been used in the construction and set $L_n$ and $R_n$ to be the active pair.  Set $\sigma_{s+1} = \sigma_s$ and enumerate content$(\sigma_{s+1})$ into both $L_n$ and $R_n$.  Pick the least even number, $k$, such that $k$ and $k+1$ have not appeared in the construction so far.  Enumerate $k$ into $L_n$, $k+1$ into $R_n$ and set $r_n(0) = k$.\par

\textit{Case 3:}  Let $L_n$ and $R_n$ be the active pair of sets, and suppose $\alpha \in S$ is least such that $M(\alpha) \neq M(\sigma_s)$.  Set $\sigma_{s+1} = \alpha \hat{\ } \beta$, where $\beta$ is an increasing enumeration of $\{ x : x \leq \mbox{max(content}(\alpha)) \}$, and enumerate content$(\sigma_{s+1})$ into $A$.  Next, we ``cancel" the active pair.  Specifically, we enumerate the marker elements $\langle 1,\langle n,0 \rangle \rangle$ and $\langle 1,\langle n,1 \rangle \rangle$ into $L_n$ and $R_n$, respectively, and mark the pair as inactive.\par

\textit{Case 4:}  Finally, assume there is a currently active pair, $L_n$ and $R_n$, but no $\alpha \in S$ such that $M(\alpha) \neq M(\sigma_s)$.  Pick the least even number $k$ larger than any number used in the construction so far, enumerate $k$ into $L_n$, $k+1$ into $R_n$ and set $r_n(i+1)=k$, where $i$ is the greatest value for which $r_n(i)$ is defined.\par

\bigskip

To verify that the above construction produces a family with the desired properties, we must verify three statements:

\begin{enumerate}
\item $\mathcal F_{m,e}$ is TxtEx$^*$-learnable for all $M$ and $e$.
\item If $W_e$ is coinfinite, then $M$ does not TxtEx$^*$-learn $\mathcal F_{m,e}$.
\item If $W_e$ is cofinite, then there is a machine, computable from $m$ and $e$, that TxtEx$^*$-learns $\mathcal F_{m,e}$.
\end{enumerate}

If there is a pair of sets that remains active cofinitely, then $\mathcal F_{m,e}$ is a finite family.  If no such pair exists, then every finite set has a unique marker by which it can be identified and the only infinite set is $A$.  In either case, the family is learnable and we conclude that the first statement is true.\par

To prove the second statement, we must again consider two cases.  Suppose $W_e$ is coinfinite.  If a pair of sets remains active cofinitely, then $M$ outputs the same hypothesis on all extensions of a finite partial enumeration whose content is contained in both members of the pair.  Thus, there are two enumerations, one for each of $L_n$ and $R_n$, on which $M$ converges to the same hypothesis.  The symmetric difference or $L_n$ and $R_n$, however, is infinite as one is co-odd and the other co-even.  If no pair remains active infinitely, then there must be an infinite number of stages during the construction at which $\sigma_0 \prec \sigma_1 \prec \ldots$ are found such that $M(\sigma_s) \neq M(\sigma_{s+1})$ and $A$ is enumerated by $f(n)=\sigma_n(n)$.  In either case, $M$ fails to TxtEx$^*$-learn $\mathcal F_{m,e}$.\par

Finally, we must exhibit a machine that can TxtEx$^*$-learn all possible families $\mathcal F_{m,e}$ where~$e \in \mbox{COF}$.  In particular, a machine that can TxtEx$^*$-learn the following possibly non-$u.c.e.$ family as well as every subfamily:
$$\mathcal G = \bigcup_{e\in \mbox{COF}, m \in \mathbb N} \mathcal F_{m,e}\hspace{.1em}.$$

Since $W_e$ is cofinite for all the families under consideration, observe that $\mathcal F_{m,e}$ consists of a (possibly infinite) number of finite sets and either one or two sets ($A$ or a pair $L_n$ and $R_n$) that are cofinite in the complement of the marker columns.  Fix codes $a_0$, $a_1$ and $a_{m,e}$ such that $W_{a_0} = \emptyset$, $W_{a_1} = \mathbb N \setminus \{ \langle x,y \rangle : (x=0 \vee x=1) \wedge y \in \mathbb N \}$ and $W_{a_{m,e}}$ is the set $A \in \mathcal F_{m,e}$.  For notational ease, let $C(k) = \{ \langle k,x \rangle : x \in \mathbb N\}$.  Define $N_{m,e}$ by
$$N_{m,e}(\sigma) = \begin{cases}
	a_0 & \mbox{if } \mbox{content}(\sigma)\cap C(1) \neq \emptyset, \\
	a_{m,e} & \mbox{if } \langle 0,\langle 0,3 \rangle \rangle \in \mbox{content}(\sigma), \\
	a_1  & \mbox{otherwise.}
\end{cases}$$
Further, define a machine $N$ by
$$N(\sigma) = \begin{cases}
	N_{m,e} & \mbox{if } \langle 0,\langle m,0 \rangle \rangle,\langle 0,\langle e,1 \rangle \rangle \in \mbox{content}(\sigma), \\
	0  & \mbox{otherwise}.
\end{cases}$$

To prove that $N$ learns $\mathcal G$, select an arbitrary $D \in \mathcal G$.  Let $e$ and $m$ be the codes such that $D \in \mathcal F_{m,e}$ for cofinite $W_e$.\par

\textit{Case 1:}  Suppose that, during the construction of $\mathcal F_{m,e}$, no pair of sets remains active infinitely.  In this case, every member of $\mathcal F_{m,e}$ is marked, either with a marker in $C(1)$ or with $\langle 0,\langle 0,3 \rangle \rangle$.  Thus, $N$ succeeds in TxtEx$^*$-learning $\mathcal F_{m,e}$.\par

\textit{Case 2:}  Suppose, on the other hand, a pair of sets remains active cofinitely during the construction.  Let $L_n$ and $R_n$ be that unique pair of sets.  If $D = A$, then $\langle 0,\langle 0,3 \rangle \rangle \in D$ and cofinitely often $N_{m,e}$ hypothesizes $a_{m,e}$.  Every other finite set contains a unique marker and is hence TxtEx$^*$-learnable by $N_{m,e}$.  Finally, if $D = L_n$ or $R_n$ then $D =^* \mathbb N \setminus (C(0)\cup C(1))$.  No initial segment of any enumeration of $D$ contains either a marker in $C(1)$ or the marker $\langle 0,\langle 0,3 \rangle \rangle$.  Thus, $N_{m,e}$ again succeeds in TxtEx$^*$-learning the set.\par

\end{proof}

\begin{Theorem}
EXL$^*$ is $\Sigma_5^0$-hard
\end{Theorem}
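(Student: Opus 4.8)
The plan is to reduce an arbitrary $\Sigma_5^0$ predicate $P(e)$ to $EXL^*$, mirroring the structure of the proof of Theorem \ref{bcLowBound} but substituting Lemma \ref{ex*Lemma} for Lemma \ref{bcLemma}. The two lemmas have identical shape: each builds a family $\mathcal F_{m,e}$, uniformly computable in $m$ and $e$, which diagonalizes against the machine $M_m$ exactly when $W_e$ is coinfinite, while remaining learnable (now under $TxtEx^*$ rather than $TxtBC$) in all cases, and is uniformly learnable when $W_e$ is cofinite. Because the interface presented by the two lemmas matches, I expect essentially the same diagonalization wrapper to work, with $TxtBC$ replaced throughout by $TxtEx^*$.

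\begin{proof}
We reduce an arbitrary $\Sigma_5^0$ predicate $P(e)$ to $EXL^*$, following the argument of Theorem \ref{bcLowBound} with Lemma \ref{ex*Lemma} in place of Lemma \ref{bcLemma}. Exactly as in that proof, using the arithmetic manipulations that express a $\Sigma_4^0$ predicate with a unique witness and the reduction to $\mbox{COINF}$, we obtain a computable function $f$ such that
\begin{align*}
&P(e)\rightarrow (\exists x) [ (\forall x'>x) (\forall y) (f(e,x',y)\in \mbox{COF}) \\
&\indent \indent \indent \wedge (\forall x' \leq x) (\exists^{\leq 1} y) (f(e,x',y)\in \mbox{COINF}) ]
\end{align*}
and
$$\neg P(e)\rightarrow (\forall x) [(\exists! y) (f(e,x,y)\in \mbox{COINF}) ].$$
Define the family
$$\mathcal G_e = \bigcup_{x,y\in \mathbb N} \mathcal F_{x,f(e,x,y)},$$
where now $\mathcal F_{m,e}$ is the family supplied by Lemma \ref{ex*Lemma}. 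This family is $u.c.e.$ and a $\Sigma_1^0$ code for it is computable from $e$, so the map $e \mapsto \mathcal G_e$ is a computable reduction; it remains to verify that $\mathcal G_e \in EXL^*$ if and only if $P(e)$.

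\textbf{The case $\neg P(e)$.} If $P(e)$ fails, then for every $x$ there is a $y$ with $f(e,x,y)\in \mbox{COINF}$. In particular, for each computable learner $M_m$ there is a $y$ such that $f(e,m,y)\in \mbox{COINF}$, so by part (1) of Lemma \ref{ex*Lemma} the subfamily $\mathcal F_{m,f(e,m,y)} \subseteq \mathcal G_e$ is not $TxtEx^*$-learnable by $M_m$. Since $m$ was arbitrary, no computable machine $TxtEx^*$-learns $\mathcal G_e$, whence $e \notin EXL^*$.

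\textbf{The case $P(e)$.} Suppose $P(e)$ holds and fix $x_0$ with $(\forall x\geq x_0)(\forall y)(f(e,x,y)\in \mbox{COF})$. Let $a_0,\ldots,a_k$ enumerate those $x < x_0$ for which a (necessarily unique) $b_i$ satisfies $f(e,a_i,b_i)\in \mbox{COINF}$, and let $K_i$ be a machine $TxtEx^*$-learning $\mathcal F_{a_i,f(e,a_i,b_i)}$, as furnished by part (1) of Lemma \ref{ex*Lemma}. Let $N$ be the machine from the proof of Lemma \ref{ex*Lemma}, which $TxtEx^*$-learns $\bigcup_{e'\in \mbox{COF},\,m\in \mathbb N}\mathcal F_{m,e'}$ and hence every subfamily thereof. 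Define $M$ exactly as in Theorem \ref{bcLowBound}: on input $\sigma$, use $K_i(\sigma)$ when the marker pair $\langle 0,\langle m,0\rangle\rangle$, $\langle 0,\langle e',1\rangle\rangle$ identifying $\mathcal F_{a_i,f(e,a_i,b_i)}$ appears in $\mbox{content}(\sigma)$ for some $i\leq k$, and use $N(\sigma)$ otherwise. Any enumeration of a set from one of the finitely many $\mbox{COINF}$-subfamilies is eventually routed to the correct $K_i$ and thus $TxtEx^*$-identified; every other set lies in some $\mathcal F_{x,f(e,x,y)}$ with $f(e,x,y)\in \mbox{COF}$ and is $TxtEx^*$-identified by $N$. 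Therefore $M$ $TxtEx^*$-learns $\mathcal G_e$ and $e \in EXL^*$. This establishes the reduction, so $EXL^*$ is $\Sigma_5^0$-hard.
\end{proof}

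The step I expect to require the most care is not the wrapper itself — which is a verbatim adaptation of Theorem \ref{bcLowBound} — but rather the marker bookkeeping in the $P(e)$ case: I must confirm that the marker columns built into $\mathcal F_{m,e}$ by Lemma \ref{ex*Lemma} genuinely let the combined machine $M$ route each enumeration to the correct sublearner, and that the $\mbox{COINF}$ subfamilies occurring below $x_0$ are exactly the finitely many flagged by the $(\exists^{\leq 1} y)$ clause. The $\mbox{COF}$-learnability of the remaining bulk rests entirely on the uniformity clause (2) of Lemma \ref{ex*Lemma}, so once that lemma's interface is accepted, the hardness argument goes through with only the cosmetic replacement of $TxtBC$ by $TxtEx^*$.
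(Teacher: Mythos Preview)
Your proposal is correct and takes essentially the same approach as the paper: the paper's proof is a one-line deferral stating that the argument is identical to that of Theorem~\ref{bcLowBound} with Lemma~\ref{ex*Lemma} substituted for Lemma~\ref{bcLemma}, and you have simply written out that deferred argument in full (including the appropriate adjustment of the marker-column conventions from Lemma~\ref{bcLemma}'s column~$1$ to Lemma~\ref{ex*Lemma}'s column~$0$).
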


\begin{proof}
The proof is identical to that of Theorem \ref{bcLowBound} with one exception; in the conclusion, we use Lemma \ref{ex*Lemma} to justify the claim that $N$ TxtEx$^*$-learns $\mathcal F_{x,f(x,y)}$ for any $x$ provided that $y\in \mbox{COF}$.\par

\end{proof}

\section{Conclusion}

In summary, we have proved $\Sigma_3^0$-completeness for FINL, $\Sigma_4^0$-completeness for EXL and $\Sigma_5^0$-completeness for both BCL and EXL$^*$.  Numerous other learning criteria are known to the theory, but their arithmetic complexities remain to be determined.\par

One question stemming from the above work is to ask if there are natural classes other than $u.c.e.$ families for which these complexity questions can be answered.  Any candidate would have to provide a framework within which an upper bound could be placed on complexity.  If more general classes are considered, observe that complexity can only increase, whereas for more restrictive classes complexity can only decrease.

\bibliographystyle{plain}

\end{document}